\newtheorem{thm}{Theorem}[section]
\newtheorem{cor}[thm]{Corollary}
\newtheorem{lem}[thm]{Lemma}
\newtheorem{prop}[thm]{Proposition}
\newtheorem{defn}[thm]{Definition}
\newtheorem{exam}[thm]{Example}
\newtheorem{exams}[thm]{Examples}
\numberwithin{equation}{section}
\begin{document}

\title[Pro-solvable Lie algebras]{On derivations and low-dimensional (co)homology groups of pro-sovable Lie algebras associated with $\mathbf{n}_1$ and $\mathbf{n}_2$}

\author{K.K. Abdurasulov$^1$, I.S. Rakhimov$^{1,2}$, G.O. Solijanova$^3$, }

\address{$^1$ V.I.Romanovski Institute of Mathematics Uzbekistan Academy of Sciences, Tashkent, Uzbekistan.} \email{abdurasulov0505@mail.ru}

\address{$^2$ Faculty of Computer and Mathematical Sciences, Universiti Teknologi MARA (UiTM), Shah Alam, Malaysia}
\email{isamiddin@uitm.edu.my}

\address{$^3$ National University of Uzbekistan, Tashkent, Uzbekistan} \email{gulhayo.solijonova@mail.ru}

\begin{abstract} In the paper we describe the derivations of two $\mathbb{N}$-graded infinity-dimensional Lie algebras $\mathbf{n}_1$ and $\mathbf{n}_1$ what are positive parts of affine Kats-Moody algebras $A^{(1)}_1$ and $A^{(2)}_2$, respectively. Then we construct all pro-solvable Lie algebras whose potential nilpotent ideals are $\mathbf{n}_1$ and $\mathbf{n}_2$. For two specific representatives of these classes low-dimensional (co)homology groups are computed.
\end{abstract}

\subjclass[2010]{17B40, 17B56, 17B65}

\keywords{Lie algebra, potentially nilpotent Lie algebra, pro-nilpotent (pro-solvable) Lie algebra, cohomology group.}

\maketitle
\normalsize

\section{Introduction}

The paper contains a pioneer results towards a generalization to infinity-dimensional case of the method applied in finite-dimensional case to construct solvable Lie algebras by using nilpotent algebras (see \cite{Mub}).
In this section we introduce necessary definitions to use them later. Most of the definitions are borrowed from \cite{Millio}. All the algebras considered in the paper are assumed to be over the field of complex numbers.
\begin{defn} A Lie algebra $( L,[\cdot,\cdot])$  is called a
positively graded ($\mathbb{N}$-graded) if it is represented as a direct sum $$L=\bigoplus\limits_{i=1}^\infty L_i$$ its homogonious subspaces $L_i$ such that $[L_i,L_j]\subset L_{i+j}$ for all $i,j \in \mathbb{N}.$
\end{defn}
\begin{exam} \emph{} \label{exam1}
 Let $L = L(m)$ be the free Lie algebra with the generators $a_1,...,a_m.$ Consider the following linear span of the $k$-words $L_k=span \{[a_{i_1},[a_{i_2},[...,...]],a_{i_k}]\}$. Then it is easy to see that $L_k$, where $k=1, 2,...$ are desired homogeneous subspaces of $L=L(m).$
\end{exam}
\begin{exam}
Let $L=span \{e_1, e_2, e_3, e_4,...\}$ with the commutation rules
$$[e_1,e_i] = e_{i+1}, i \geq 2, \ \ [e_i,e_k] = 0, \ \mbox{where}\ i,k \neq 1.$$ As a homogeneous subspaces we take $L_k=span \{e_k\},$ $k=1,2,....$
\end{exam}
The quotient algebra $L/I$ of an infinity-dimensional positively graded Lie algebra $L=\bigoplus\limits_{i=1}^\infty L_i$ by the ideal $I=\bigoplus\limits_{i=k+1}^\infty L_i$ is a nilpotent Lie algebra.

The concept of Lie algebra's width has been introduced by Zelmanov and Shalev in \cite{ZSh1,ZSh2}. A
positively graded Lie algebra $L=\bigoplus\limits_{i=1}^\infty L_i$
is said to be of width $d$ if there
is a positive number $d$ such that $\dim L_i \leq d$, for all $i \in \mathbb{N}.$ The problem
of classifying graded Lie algebras of finite width was outlined by Zelmanov and
Shalev as an important and difficult problem (even ``a formidable challenge'' \cite{ZSh2}).

The class of positive graded finite-dimensional Lie algebras has been classified by D.Millionshchikov (see \cite{Millio,DisM}).  As for infinity-dimensional case D.Millionshchikov has managed to classify those with the width $\frac{3}{2}$ (see \cite{Millio}).

\begin{defn} A grading $L=\bigoplus\limits_{i=1}^\infty L_i$ is called natural if $[L_1,L_i]\subset L_{i+1}$ for all $i \in \mathbb{N}.$
\end{defn}

\begin{exams}
Let us consider the algebra $L=span \{e_1,e_2,e_3,e_4,...\}$ from Example \ref{exam1}.
$$L_1= span \{e_1,e_2\}, \ \ L_2= span \{e_3\}, \ \ L_3= span \{e_4\}, \ \ L_4= span \{e_5\}, \dots $$ is the natural grading of $L$.
\end{exams}


For a Lie algebra $L$ we define the {\it lower central} and the {\it derived series} as follows
$$L^1=L, \ L^{k+1}=[L^k,L],  \ k \geq 1, \qquad L^{[1]}=L, \ L^{[s+1]}=[L^{[s]},L^{[s]}], \ s \geq 1,$$
respectively.

\begin{defn}  A Lie algebra $L$ is said to be potentially solvable (respectively, potentially nilpotent) if
$\bigcap_{i=1} ^{\infty}L^{[i]}=0$ (respectively, $\bigcap_{i=1} ^{\infty}L^{i}=0$).
\end{defn}
\begin{exams}
\begin{enumerate}\emph{}

\item[1.] Here are two examples of potentially nilpotent Lie algebras (see \cite{Millio}).
\begin{itemize}
\item Lie algebra $\mathbf{m}_2=span\{e_1, e_2, e_3, \dots \}$ with the commutation rules
$$\mathbf{m}_2:=\left\{\begin{array}{ll}
[e_{1} ,e_{i}]=e_{i+1},\quad i\geq2,\\[1mm]
[e_{2} ,e_{j}]=e_{j+2},\quad i\geq3.\\[1mm]
 \end{array}\right.$$
 is a potentially nilpotent Lie algebra.
 \item The positive part $W^{+}$ of the Witt algebra is given by the commutation rules $$[e_i,e_j]=(i-j)e_{i+j}, \quad i,j\in \mathbb{N}.$$
 The algebra $W^{+}$ also is a potentially nilpotent Lie algebra.
\end{itemize}

\item[2.] Another two algebras below give examples of potentially solvable but not necessarily potentially nilpotent algebras (see \cite{Qobil2, Ayupov}).
\begin{itemize}
\item Let $L=span \{x, y, e_1, e_2, e_3, \dots \}$ with the followinf rules of compositions
$$L:=\left\{\begin{array}{ll}
[e_{i} ,e_{1}]=e_{i+1},\quad i\geq2,\\[1mm]
[x,e_1]=e_1,\\[1mm]
[x,e_i]=(i-1)e_i,\quad i\geq 2, \\[1mm]
[y,e_i]=e_i,\quad i\geq 2.
 \end{array}\right.$$

\item Consider non-negative part $\widetilde{W^{+}}$ of the Witt algebra $W$ given by the following rules
$$[e_i,e_j]=(i-j)e_{i+j}, \quad i,j\geq 0.$$
The algebra $\widetilde{W^{+}}$ is potentially solvable but is not potentially nilpotent.
\end{itemize}
\end{enumerate}

\end{exams}

\begin{defn}  A Lie algebra $L$ is said to be pro-solvable (respectively, pro-nilpotent) if
$\bigcap_{i=1} ^{\infty}L^{[i]}=0$ and $\dim L/L^{[i]}<\infty$ (respectively,
$\bigcap_{i=1} ^{\infty}L^{i}=0$ and $\dim L/L^{i}<\infty$) for any $i\geq1$.
\end{defn}

Let $L$ be a pro-nilpotent Lie algebra. Then for the ideals $L^k$, where $k \geq 1$ one has
$$L=L^1 \supseteq L^2 \supseteq ...\supseteq L^k \supseteq L^{k+1}\supseteq ..., \ \mbox{and}\ [L^i,L^i]\subset L^{i+j}, \ i,j \in \mathbb{N}.$$
Consider the associated graded Lie algebra $gr L= \bigoplus\limits_{i=1}^{+\infty}(L^i/L^{i+1})$ with respect to the above filtration. The Lie bracket on $gr L$ is given by
$$[x+L^{i+1},y+L^{i+1}]=[x,y]+L^{i+j+1}, \ \mbox{for}\ x \in L^{i}, \ y \in L^{j}.$$
It is evident that $gr L$ is a $\mathbb{N}$-graded Lie algebra.
\begin{defn} A pro-nilpotent Lie algebra $L$ is said to be a
 \emph{naturally graded} if it is isomorphic to $gr L$. The grading $L=\bigoplus\limits_{i=1}^\infty L_i$ of a naturally graded Lie algebra $L$ is called natural if there exists a graded isomorphism
 $$\varphi:gr L \longrightarrow L, \ \ \varphi((gr L)_i)=L_i, \ \ i \in \mathbb{N}.$$
\end{defn}

Let us define \emph{natural grading} of the algebra $L$. Write
$$L_1:=L^1/L^2,\,L_i:=L^i/L^{i+1},\ \mbox{where}\ i=2,3,\dots$$

In this case, $L\simeq L_1\oplus L_2\oplus L_3\oplus\dots$
 One can readily verify the validity of the inclusions $[L_i,L_j]\subset L_{i+j}$ and the grading is natural.

Let $L$ be a pro-nilpotent Lie algebra. Suppose that there is a basis
$\{e_1,e_2,e_3,\dots\}$ of $L$,  $k_i=\dim L_i$ and $L^i=span \{e_{k_i},e_{k_{i+1}},e_{k_{i+2}},\dots\}$,  $i=1, 2, 3, ...$ Define
$$c(L)=(k_2-k_1,k_3-k_2,k_4-k_3,\dots),\, where\, k_1 = 1.$$
Since $L$ is potential nilpotent, it follows that
$k_{i+1}-k_i\geq1$ for any $i\geq 1$.

Here are two important examples of the naturally graded Lie algebras from \cite{Millio}.

The first of them is a Lie algebra $\mathbf{n}_1$ with the following non-zero brackets on a basis $\{e_1,e_2,e_3,...\}$
$$\mathbf{n}_1:\quad \begin{array}{lllll}
[e_i, e_j] =c_{i,j} e_{i+j},& i,j\in \mathbb{N}, & c_{i,j}=\left \{\begin{array}{ccc}
                                                    1, & if & i-j\equiv 1 \ mod \ 3,\\[1mm]
                                                    0,&if & i-j\equiv 0 \ mod\ 3,\\[1mm]
                                                    -1,& if& i-j\equiv -1 \ mod\ 3.
                                                     \end{array}\right.
\end{array}$$

One can see that
$$(\mathbf{n}_1)_1:=\mathbf{n}_1^1/\mathbf{n}_1^2=span \{e_1,e_2\},\quad (\mathbf{n}_1)_i:=\mathbf{n}_1^i/\mathbf{n}_1^{i+1}=span \{e_{i+1}\},\,i=2,3,\dots\,.$$
Hence, $$c(\mathbf{n}_1)=\{2,1,1,\dots\}$$

The second algebra is $$\mathbf{n}_2:\quad \quad \begin{array}{lllll}
[f_q,f_l] =d_{q,l} f_{q+l},& q,l\in \mathbb{N},
\end{array}$$ given by the following table of multiplications on a basis $\{f_{8i+1},f_{8i+2} ,f_{8i+3}, f_{8i+4}, f_{8i+5}, f_{8i+6}, f_{8i+7}, f_{8i+8}\}, \ \ i \in \mathbb{N}\cup \{0\}:$
$$\begin{array}{lllllllll}
& f_{8j}&f_{8j+1} &f_{8j+2} &f_{8j+3}& f_{8j+4} &f_{8j+5} &f_{8j+6} &f_{8j+7}\\[1mm]
f_{8i}  &  \quad 0& \quad1& -2& -1& \quad0&\quad1&\quad2&  -1\\[1mm]
f_{8i+1}&  -1&\quad0&\quad1 &\quad1 & -3&  -2& \quad0&\quad1\\[1mm]
f_{8i+2}&   \quad2& -1& \quad0& \quad0& \quad0&\quad1 & -1& \quad0\\[1mm]
f_{8i+3} &\quad1&  -1& \quad0& \quad0& \quad3&  -1& \quad1&  -2\\[1mm]
f_{8i+4} &\quad0 &\quad3 &\quad0&  -3& \quad0& \quad3& \quad0&  -3\\[1mm]
f_{8i+5} & -1 &\quad2 & -1 &\quad1 & -3& \quad0& \quad0&  \quad 1\\[1mm]
f_{8i+6}&  -2 &\quad0&\quad1&  -1&\quad 0& \quad0 &\quad0&\quad1\\[1mm]
f_{8i+7}& \quad1 & -1 &\quad0 &\quad2& \quad3&-1 & -1&\quad 0\\[1mm]
\end{array}$$
where $i,j$ are non-negative integers.

It is clearly seen that
$$\mathbf{n}_2^2=span \{f_3,f_4,f_5,\dots\},\quad \mathbf{n}_2^3=span \{f_4,f_5,f_6,\dots\},\dots,
\mathbf{n}_2^{i}=span \{f_{i+1},f_{i+2},\dots\},\dots$$
Hence, $$(\mathbf{n}_2)_1=span \{f_1,f_2\},\, (\mathbf{n}_2)_i=span \{f_{i+1}\}$$
$$c(\mathbf{n}_2)=(2,1,1,\dots)$$
The algebras  $\mathbf{n}_1$ and $\mathbf{n}_2$ are naturally graded pro-nilpotent Lie algebras with homogeneous components $(\mathbf{n}_1)_j$ and $(\mathbf{n}_2)_j$, where $j=1, 2, 3, \dots,$ respectively.
 These two algebras are known as positive parts of the affine Kats-Moody algebras $A^{(1)}_1$ and $A^{(2)}_2,$ respectively. They also are famous with their
 special role in the theory of combinatoric identities (see \cite{Garland, Lepowsky1, Lepowsky2}).
The algebras $\mathbf{n}_1$ and $\mathbf{n}_2$ are two representatives (among a few others) of the isomorphism classes of natural graded algebras over fields of characteristic zero given by A. Fialowski and D.Millionshchikov (see \cite{F, Millio}). Note that the similar problems that in this paper for another two representative denoted by the authors of \cite{F, Millio} as $\mathbf{m}_0$ and $\mathbf{m}_2$ have been treated earlier in \cite{Qobil2}.
\begin{defn} A linear map $d \colon L \rightarrow L$ of $(L, [\cdot,\cdot])$ is said to be a
 \textbf{derivation} of $L$ if for all $x, y \in L$, the following  condition \[d([x,y])=[d(x),y] + [x, d(y)] \,\] holds.
\end{defn}
Recall that for $x \in L$ the function $ad_x: L \rightarrow L$ defined by $ad_x(y)=[x,y], \ \forall y \in L$
is a derivation called  {\it inner derivation}.

\begin{defn} A Lie algebra $L$ is called  \textbf{complete} if ${\rm Center}(L)=0$ and all the derivations of $L$ are
inner.
\end{defn}

\begin{defn} A linear map $\rho: L \to L$ is called potentially nilpotent, if $\cap_{i=1}^{\infty}(Im\ \rho^i)=0$ holds true.
\end{defn}

Below we introduce the analogue of the notion of nil-independency which had played a crucial role in the description of finite-dimensional solvable Lie algebras in \cite{Mub}.

\begin{defn} Derivations $d_{1},d_{2},\dots,d_{n}$ of a Lie algebra $L$ over a field $\mathbb{F}$ are said to be potentially nil-independent, if a map $f=\alpha_{1}d_{1}+\alpha_{2}d_{2}+\ldots+\alpha_{n}d_{n}$ is not potentially nilpotent for any scalars $\alpha_{1},\alpha_{2},\dots,\alpha_{n}\in \mathbb{F}$. In other words,
$\cap_{i=1}^{\infty}Imf^i=0$ if and only if $\alpha_{1}=\alpha_{2}=\dots=\alpha_{n}=0.$
\end{defn}


Recall that
$${\rm H}^1(L,L)={\rm Der}(L)/{\rm Inner}(L) \quad \mbox{and} \quad {\rm H}^2(L,L)={\rm Z}^2(L,L)/{\rm B}^2(L,L)$$ where the set ${\rm Z}^2(L,L)$ consists of those elements $\varphi\in {\rm Hom}(\wedge^2L, L)$ such that
\begin{equation}\label{eq5}
[x,\varphi(y,z)] - [\varphi(x,y), z]
+[\varphi(x,z), y] +\varphi(x,[y,z]) - \varphi([x,y],z)+\varphi([x,z],y)=0,
\end{equation}
while ${\rm B}^2(L,L)$ consists of elements $\psi\in {\rm Hom}(\wedge^2L, L)$ such that
\begin{equation}\label{eq4}
\psi(x,y)=[d(x),y] + [x,d(y)] - d([x,y]) \,\, \mbox{for some linear map} \,\, d\in {\rm Hom}(L,L).
\end{equation}

In terms of cohomology groups the notion of completeness of a Lie algebra $L$ means that it is centerless and ${\rm H}^1(L,L)=0$.

For the convenience we introduce the notation
\begin{equation}\label{Coc}Z(a,b,c)=[a,\varphi(b,c)]-[\varphi(a,b),c]+[\varphi(a,c),b]
+\varphi(a,[b,c])-\varphi([a,b],c)+\varphi([a,c],b).\end{equation}

And at last, the organization of the paper is as follows. The next section contains the description of the derivations of two $\mathbb{N}$-graded
infinity-dimensional Lie algebras $\mathbf{n}_1$ and $\mathbf{n}_2$ what are positive parts of affine Kats-Moody algebras $A^{(1)}_1$ and $A^{(2)}_2$,
respectively. This followed by the construction of all pro-solvable Lie algebras whose potential nilpotent ideals are $\mathbf{n}_1$ and $\mathbf{n}_2$.
The final sections contains the results on completeness of two specific representatives of these classes and the triviality of their second
(co)homology groups.

\section{Main results}

\subsection{Derivations of pro-nilpotent Lie algebras $\mathbf{n}_1$ and $\mathbf{n}_2$}\

 In this section we describe the derivations of $\mathbf{n}_1$ and $\mathbf{n}_2$. For the algebra $\mathbf{n}_1$ the following result holds true.

\begin{prop}\label{prop1}
The derivations of the algerba $\mathbf{n}_1$ are given as follows:

$\begin{array}{lll}
d(e_{3i-2})&=&\sum\limits_{k=1}^{t}(((i-1)\beta_{3k-1}+i\alpha_{3k-2})e_{3k+3i-5}+\alpha_{3k}e_{3k+3i-3}),\\
d(e_{3i-1})&=&\sum\limits_{k=1}^{t}(i\beta_{3k-1}+ (i-1)\alpha_{3k-2})e_{3k+3i-4}+\beta_{3k}e_{3k+3i-3}),\\
d(e_{3i})&=&\sum\limits_{k=1}^{t}(i(\beta_{3k-1}+ \alpha_{3k-2})e_{3k+3i-3}-\beta_{3k}e_{3k+3i-2}-\alpha_{3k}e_{3k+3i-1}),\,\, i\in \mathbb{N}.
\end{array}$
\end{prop}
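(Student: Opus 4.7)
The plan is to exploit that $\mathbf{n}_1$ is generated as a Lie algebra by $e_1$ and $e_2$: indeed every $e_n$ with $n \geq 3$ lies in $\mathbf{n}_1^2$ and is reachable as an iterated bracket of $e_1,e_2$ via chains like $e_3 = [e_2,e_1]$, $e_4 = -[e_3,e_1]$, $e_5 = [e_3,e_2]$, and so on (at each step one uses whichever of $c_{n,1},c_{n,2}$ is nonzero). Hence any derivation $d$ is fully determined by $d(e_1)$ and $d(e_2)$, so I begin by writing $d(e_1) = \sum_{k\geq 1}\alpha_k e_k$ and $d(e_2) = \sum_{k\geq 1}\beta_k e_k$ as formal linear combinations with arbitrary scalars, and extend $d$ inductively to every $e_n$ through the Leibniz rule applied to the above presentations.

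The critical restrictions that force $\alpha_{3k-1}=0$ and $\beta_{3k-2}=0$ for all $k\geq 1$ come from the \emph{vanishing} brackets $[e_i,e_j]=0$ that hold whenever $i-j\equiv 0 \pmod 3$ with $i\neq j$. Applying $d$ to such a relation yields $[d(e_i),e_j]+[e_i,d(e_j)]=0$; after substituting the already-computed Leibniz form of $d(e_j)$ this becomes a nontrivial linear equation that isolates a single offending coefficient. For instance, $[e_2,e_5]=0$ together with the computed $d(e_5)$ produces a nonzero multiple of $\beta_1$ equated to zero, forcing $\beta_1=0$; iterating through the family $[e_1,e_4],[e_2,e_5],[e_1,e_7],[e_2,e_8],\dots$ kills exactly the coefficients $\alpha_{3k-1}$ and $\beta_{3k-2}$, leaving the four sequences $\alpha_{3k-2},\alpha_{3k},\beta_{3k-1},\beta_{3k}$ as the surviving free parameters.

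With the admissible scalars identified, I prove the stated closed form for $d(e_{3i-2})$, $d(e_{3i-1})$, $d(e_{3i})$ by induction on $i$, the base $i=1$ reducing to the formulas for $d(e_1)$, $d(e_2)$, and $d(e_3)=d([e_2,e_1])$ themselves. The linear weight factors $i$ and $i-1$ attached to $\alpha_{3k-2}$ and $\beta_{3k-1}$ arise naturally: each step $e_n\mapsto e_{n+1}$ along the generating chain increments by one the number of times $[e_1,\cdot]$ or $[e_2,\cdot]$ has been applied, and the three-periodicity of the structure constants $c_{i,j}$ makes these contributions accumulate as simple linear functions of the level index $i$. A final direct check that the resulting map satisfies $d([e_i,e_j])=[d(e_i),e_j]+[e_i,d(e_j)]$ for every pair $(i,j)$, organized by the nine sub-cases indexed by $(i\bmod 3,\ j\bmod 3)$, completes the proof.

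The principal obstacle is the combinatorial bookkeeping. Because $c_{i,j}$ oscillates through $\{1,0,-1\}$ with period three in $i-j$, each Leibniz expansion yields many terms of which only a controlled subset survive, and identifying how the surviving coefficients recombine into the factors $i$ and $i-1$ requires tracking all three residue classes $3i-2$, $3i-1$, $3i$ in parallel. Once the induction is set up so that a single step $i\mapsto i+1$ simultaneously advances all three classes, however, the closed form is preserved in a manifestly linear way and the recursion closes without further surprise.
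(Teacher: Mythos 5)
Your proposal is correct and follows essentially the same route as the paper: determine $d$ from $d(e_1),d(e_2)$ using the two-generator property, kill the coefficients $\alpha_{3k-1}$ and $\beta_{3k-2}$ by applying $d$ to the vanishing brackets (the paper uses $d([e_4,e_1])=0$ and $d([e_5,e_2])=0$, each of which eliminates the whole family of offending coefficients at once), and then establish the closed form by induction on $i$. The only difference is that you add a final sufficiency check that the resulting map is indeed a derivation, which the paper omits.
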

\begin{proof} Since the algebra $\mathbf{n}_1$ has two generators $\{e_{1}, e_{2}\}$, any derivation $d$ of $\mathbf{n}_1$  is completely determined by $d(e_{1}) $ and
$d(e_{2})$.

Let
$$d(e_{1})=\sum_{i=1}^{p}\alpha_{1_{i}}e_{{i}}, \quad d(e_{2})=\sum_{j=1}^{q}\beta_{{j}}e_{j}.$$

Without loss of generality one can assume that
$$d(e_{1})=\sum_{i=1}^{3t}\alpha_{i}e_{i},\quad d(e_{2})=\sum_{j=1}^{3t}\beta_{j}e_{j}, \quad  max\{p,q\}\le 3t, \quad t\in \mathbb{N}.$$

Applying the definition of the derivation we have

$\begin{array}{lllll} d(e_3)&=&d([e_2,e_1])&=&
\sum\limits_{k=1}^{t}(\beta_{3k-1}+\alpha_{3k-2})e_{3k}-\sum\limits_{k=1}^{t}\beta_{3k}e_{3k+1}
-\sum\limits_{k=1}^{t}\alpha_{3k}e_{3k+2},\\
d(e_4)&=&-d([e_3,e_1])
&=&\sum\limits_{k=1}^{t}(\beta_{3k-1}+2\alpha_{3k-2})e_{3k+1}+\sum\limits_{k=1}^{t}\alpha_{3k-1}e_{3k+2}
-\sum\limits_{k=1}^{t}\alpha_{3k}e_{3k+3}/\end{array}$\\ and
$0=d([e_4,e_1])
=-2\sum\limits_{k=1}^{t}\alpha_{3k-1}e_{3k+3}=0$ implies $\alpha_{3k-1}=0,\,\, 1\leq k\leq t.$

Moreover,

$\begin{array}{lll}
d(e_5)&=&d([e_3,e_2])=[d(e_3),e_2]+[e_3,d(e_2)]\\
&=&\sum\limits_{k=1}^{t}(\beta_{3k-1}+\alpha_{3k-2})e_{3k+2}+\sum\limits_{k=1}^{t}\beta_{3k}e_{3k+3}
-\sum\limits_{k=1}^{t}\beta_{3k-2}e_{3k+1}+\sum\limits_{k=1}^{t}\beta_{3k-1}e_{3k+2}\\
&=&\sum\limits_{k=1}^{t}(2\beta_{3k-1}+\alpha_{3k-2})e_{3k+2}+\sum\limits_{k=1}^{t}\beta_{3k}e_{3k+3}
-\sum\limits_{k=1}^{t}\beta_{3k-2}e_{3k+1},\end{array}$\\

and
$0=d([e_5,e_2])=\sum_{k=1}^{t}\beta_{3k-2}e_{3k+3}=0,$ gives  $\beta_{3k-2}=0,\,\, 1\le k\le t.$

In general, by applying induction we prove that

\begin{equation} \label{n1}
\begin{array}{lll}
d(e_{3i-2})&=&\sum\limits_{k=1}^{t}(((i-1)\beta_{3k-1}+i\alpha_{3k-2})e_{3k+3i-5}+\alpha_{3k}e_{3k+3i-3}),\\
d(e_{3i-1})&=&\sum\limits_{k=1}^{t}(i\beta_{3k-1}+ (i-1)\alpha_{3k-2})e_{3k+3i-4}+\beta_{3k}e_{3k+3i-3}),\\
d(e_{3i})&=&\sum\limits_{k=1}^{t}(i(\beta_{3k-1}+ \alpha_{3k-2})e_{3k+3i-3}-\beta_{3k}e_{3k+3i-2}-\alpha_{3k}e_{3k+3i-1}).\end{array}
\end{equation}

Indeed, if $i=1$ then the formulas above give the first step of the induction.

 Suppose that (\ref{n1}) is true for $i$. Based on this for $i+1$ the formula (\ref{n1}) is written as follows

$\begin{array}{lll} d(e_{3(i+1)-2})&=&d([e_{3i-2},e_{3}])=[d(e_{3i-2}),e_{3}]+[e_{3i-2},d(e_{3})]
\\
&=&\sum\limits_{k=1}^{t}(((i-1)\beta_{3k-1}+i\alpha_{3k-2})e_{3k+3i-2}+\alpha_{3k}e_{3k+3i}+(\beta_{3k-1}
+\alpha_{3k-2})e_{3k+3i-2})\\
&=&\sum\limits_{k=1}^{t}((i\beta_{3k-1}+(i+1)\alpha_{3k-2})e_{3k+3i-2}+\alpha_{3k}e_{3k+3i})\\
&=&\sum\limits_{k=1}^{t}(((i+1-1)\beta_{3k-1}+(i+1)\alpha_{3k-2})e_{3k+3(i+1)-5}+\alpha_{3k}e_{3k+3(i+1)-3}).\end{array}$

$\begin{array}{lll} d(e_{3(i+1)-1})&=&d([e_{3},e_{3i-1}])=[d(e_{3}),e_{3i-1}]+[e_{3},d(e_{3i-1})]\\
&=&\sum\limits_{k=1}^{t}((\beta_{3k-1}+\alpha_{3k-2})e_{3k+3i+1}+\beta_{3k}e_{3k+3i}+(i\beta_{3k+2}+ (i-1)\alpha_{3k-2})e_{3k+3i-1})\\
&=&\sum\limits_{k=1}^{t}(((i+1)\beta_{3k-1}+i\alpha_{3k-2})e_{3k+3i-1}+\beta_{3k}e_{3k+3i}),\end{array}$\\
and

\quad $\begin{array}{lll}d(e_{3(i+1)})&=&d([e_{3i+2},e_1])=[d(e_{3i+2}),e_1]+[e_{3i+2},d(e_1)]\\
&=&\sum\limits_{k=1}^{t}(((i+1)\beta_{3k-1}+i\alpha_{3k-2})e_{3k+3i}-\beta_{3k}e_{3k+3i+1}
+\alpha_{3k-2}e_{3k+3i}-\alpha_{3k}e_{3k+3i+2})\\
&=&\sum\limits_{k=1}^{t}((i+1)(\beta_{3k-1}+\alpha_{3k-2})e_{3k+3i}-\beta_{3k}e_{3k+3i+1}-\alpha_{3k}e_{3k+3i+2}).\end{array}$

\end{proof}

The proof of the following proposition is carried out similarly to that of Proposition \ref{prop1}.

\begin{prop}\label{prop3}
  The derivations of the algebra $\mathbf{n}_2$ have the following form
\quad $$\begin{array}{llllll}
d(f_{8i+1})=&\sum\limits_{k=0}^{t}(((4i+1)\alpha_{8i+8k+1}+2i\beta_{8k+2})f_{8i+8k+1}
+\alpha_{8k+3}f_{8i+8k+3}+\alpha_{8k+4}f_{8i+8k+4}\\[3mm]
            &\hfill +\alpha_{8k+5}f_{8i+8k+5}-2\beta_{8k+7}f_{8i+8k+6}+\alpha_{8i+8k+8}f_{8k+8}),\\[3mm]
d(f_{8i+2})=&\sum\limits_{k=0}^{t}((4i\alpha_{8k+1}+(2i+1)\beta_{8k+2})f_{8i+8k+2}
+\beta_{8k+3}f_{8i+8k+3}+\beta_{8k+7}f_{8i+8k+7}+\beta_{8k+8}f_{8i+8k+8}),\\[3mm]\end{array}$$
$\begin{array}{llllll}
d(f_{8i+3})=&\sum\limits_{k=0}^{t}(((4i+1)\alpha_{8k+1}+(2i+1)\beta_{8k+2})f_{8i+8k+3}
+\beta_{8k+3}f_{8i+8k+4}-\alpha_{8k+5}f_{8i+8k+7}\\[3mm]
            &\hfill -\beta_{8k+7}f_{8i+8k+8}-\beta_{8k+8}f_{8i+8k+9}-2\alpha_{8k+8}f_{8i+8k+10}),\\[3mm]
d(f_{8i+4})=&\sum\limits_{k=0}^{t}(((4i+2)\alpha_{8k+1}+(2i+1)\beta_{8k+2})f_{8i+8k+4}
-3\beta_{8k+3}f_{8i+8k+5}-3\alpha_{8k+4}f_{8i+8k+7}\\[3mm]
            &\hfill +3\beta_{8k+7}f_{8i+8k+9}-3\alpha_{8k+8}f_{8i+8k+11}),\\[3mm]
d(f_{8i+5})=&\sum\limits_{k=0}^{t}(((4i+3)\alpha_{8k+1}+(2i+1)\beta_{8k+2})f_{8i+8k+5}
-2\beta_{8k+3}f_{8i+8k+6}-\alpha_{8k+3}f_{8i+8k+7}\\[3mm]
            &\hfill+\alpha_{8k+4}f_{8i+8k+8}+\alpha_{8k+5}f_{8i+8k+9}+\alpha_{8k+8}f_{8i+8k+12}),\\[3mm]
d(f_{8i+6})=&\sum\limits_{k=0}^{t}(((4i+4)\alpha_{8k+1}+(2i+1)\beta_{8k+2})f_{8i+8k+6}
+\alpha_{8k+3}f_{8i+8k+8}-\alpha_{8k+4}f_{8i+8k+9}+\alpha_{8k+8}f_{8i+8k+13}),\\[3mm]
d(f_{8i+7})=&\sum\limits_{k=0}^{t}(((4i+3)\alpha_{8k+1}+(2i+2)\beta_{8k+2})f_{8i+8k+7}
+\beta_{8k+3}f_{8i+8k+8}+2\alpha_{8k+4}f_{8i+8k+10}-\alpha_{8k+5}f_{8i+8k+11}\\[3mm]
            &\hfill -\beta_{8k+7}f_{8i+8k+12}+\beta_{8k+8}f_{8i+8k+13}),\\[3mm]
d(f_{8i+8})=&\sum\limits_{k=0}^{t}(((4i+4)\alpha_{8k+1}+(2i+2)\beta_{8k+2})f_{8k+8}
-\beta_{8k+3}f_{8i+8k+9}-2\alpha_{8k+3}f_{8i+8k+10}-\alpha_{8k+4}f_{8i+8k+11}\\[3mm]
            &\hfill +\beta_{8k+7}f_{8i+8k+13}-2\beta_{8k+8}f_{8i+8k+14}-\alpha_{8k+8}f_{8i+8k+15}),\\[3mm]
\end{array}$

\hfill \emph{where}  $i\in \mathbb{N}.$

\end{prop}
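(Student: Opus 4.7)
Since $(\mathbf{n}_2)_1 = \mathrm{span}\{f_1, f_2\}$ and the grading is natural, the algebra $\mathbf{n}_2$ is generated by $f_1$ and $f_2$. Hence any derivation $d$ is completely determined by the two values $d(f_1)$ and $d(f_2)$, and the plan is to parallel the proof of Proposition~\ref{prop1}: parametrise these two values, propagate $d$ inductively up the grading, and use vanishing brackets to eliminate every coefficient that does not survive in the statement.

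Concretely, I would write
$$d(f_1)=\sum_{i=1}^{8t}\alpha_i f_i, \qquad d(f_2)=\sum_{j=1}^{8t}\beta_j f_j,$$
the choice of upper index $8t$ reflecting the $8$-periodicity of the multiplication table. For each $k\ge 3$ I would pick a pair $(a,b)$ with $a+b=k$, $a\in\{1,2,3\}$ and $d_{a,b}\ne 0$ from the table, then compute
$$d(f_k)=d_{a,b}^{-1}\bigl([d(f_a),f_b]+[f_a,d(f_b)]\bigr).$$
Simultaneously, every pair $(a,b)$ for which $d_{a,b}=0$ forces $[d(f_a),f_b]+[f_a,d(f_b)]=0$, which yields a linear constraint on the $\alpha_i,\beta_j$. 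As in the $\mathbf{n}_1$ case, applying these constraints for small weight (roughly $k\le 10$) should be enough to force every coefficient outside the families $\alpha_{8k+1},\alpha_{8k+3},\alpha_{8k+4},\alpha_{8k+5},\alpha_{8k+8},\beta_{8k+2},\beta_{8k+3},\beta_{8k+7},\beta_{8k+8}$ to vanish.

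With those vanishing coefficients eliminated, I would prove the explicit closed form for $d(f_{8i+r})$, $r=1,\dots,8$, by induction on $i$. The base case $i=0$ is handled by the direct low-weight computation just described. For the induction step, for each residue $r$ one selects a convenient decomposition $f_{8(i+1)+r} = d_{a,b}^{-1}[f_a, f_b]$ with $a\in\{1,2,3\}$, applies the derivation property together with the induction hypothesis and the known $d(f_a)$, and matches the scalar coefficients of $f_{8(i+k)+r}$ against those claimed in the proposition; the arithmetic yields precisely the diagonal combinations $(4i+c)\alpha_{8k+1}+(2i+c')\beta_{8k+2}$ and the off-diagonal contributions displayed.

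The main obstacle is the combinatorial bookkeeping: with eight residue classes modulo $8$ for both the argument and the summation index, and with the structure constants $d_{q,l}$ taking eight distinct patterns of values, the induction step must be carried out separately for each $r\in\{1,\dots,8\}$. An additional subtlety is that certain natural decompositions $f_k = d_{a,b}^{-1}[f_a,f_b]$ are unavailable because $d_{a,b}=0$ for that pair, so one must sometimes switch to another valid decomposition and cross-check consistency between the two possible expressions for $d(f_k)$. Beyond these issues the argument is a careful but essentially mechanical extension of the proof given for $\mathbf{n}_1$.
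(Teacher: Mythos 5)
Your proposal is correct and follows essentially the same route the paper intends: the paper omits the proof of this proposition entirely, stating only that it is carried out similarly to Proposition~\ref{prop1}, and your outline (parametrising $d(f_1)$, $d(f_2)$, propagating through nonzero brackets, extracting constraints from the vanishing brackets, then inducting on $i$ within each residue class mod $8$) is precisely that method adapted to $\mathbf{n}_2$. Your remark about having to switch decompositions when $d_{a,b}=0$ (e.g.\ $[f_1,f_6]=0$, so $f_7$ must be reached via $[f_2,f_5]$ or $[f_3,f_4]$) is an accurate and necessary observation.
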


From the propositions above we can immediately obtain the following corollary on the number of nil-independent derivations of the algebras  $\mathbf{n}_1$ and $\mathbf{n}_2$.
\begin{cor}
The maximal number of potentially nil-independent derivations of $\mathbf{n}_1$ and $\mathbf{n}_2$ is $2$.
\end{cor}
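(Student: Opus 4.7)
The plan is to read from Propositions \ref{prop1} and \ref{prop3} that every derivation of $\mathbf{n}_1$ or $\mathbf{n}_2$ splits uniquely into a two-parameter \emph{diagonal} part, acting by scalars on each homogeneous basis vector, together with a complementary piece that strictly raises the $\mathbb{N}$-grading. Once this splitting is in place, the corollary reduces to an elementary two-dimensional linear algebra argument.

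Concretely, for $\mathbf{n}_1$ the $k=1$ summand in (\ref{n1}) provides two independent diagonal derivations $D_1,D_2$ corresponding to the parameters $\alpha_1$ and $\beta_2$: namely $D_1(e_{3i-2})=ie_{3i-2}$, $D_1(e_{3i-1})=(i-1)e_{3i-1}$, $D_1(e_{3i})=ie_{3i}$, with the analogous formulas for $D_2$. Every other free parameter in (\ref{n1}) produces a summand sending $e_m$ into $L^{m+1}=span\{e_{m+1},e_{m+2},\dots\}$, and a parallel reading of Proposition \ref{prop3} shows that the diagonal directions for $\mathbf{n}_2$ are again $\alpha_1$ and $\beta_2$, with $D_1(f_1)=f_1$, $D_1(f_2)=0$, $D_2(f_1)=0$, $D_2(f_2)=f_2$, while every other parameter contributes a strictly index-raising term. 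I would then establish two complementary facts: (i) any derivation $N$ with $N(L^m)\subset L^{m+1}$ for all $m$ satisfies $N^k(L)\subset L^{k+1}$, whence $\cap_k\,\text{Im}(N^k)\subset\cap_k L^{k+1}=0$, and this property is preserved under taking linear combinations; (ii) if $f=c_1D_1+c_2D_2$ with $(c_1,c_2)\neq(0,0)$, then one of the basis vectors $e_1,e_2$ (respectively $f_1,f_2$), say $v$, is an eigenvector of $f$ with nonzero eigenvalue $\lambda$, so $v=f^k(\lambda^{-k}v)\in \text{Im}(f^k)$ for every $k$, and $f$ is \emph{not} potentially nilpotent.

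To finish, given any three derivations $d_1,d_2,d_3$, decompose each as $d_j=\alpha_1^{(j)}D_1+\beta_2^{(j)}D_2+N_j$; the three pairs $(\alpha_1^{(j)},\beta_2^{(j)})\in\mathbb{C}^2$ are linearly dependent, so some nontrivial combination $\sum_j\gamma_j d_j=\sum_j\gamma_j N_j$ has trivial diagonal part and is potentially nilpotent by (i), showing that no triple of derivations is potentially nil-independent. Conversely, $D_1$ and $D_2$ themselves form a potentially nil-independent pair by (ii), so the maximum equals $2$. The most delicate step is the decomposition in the second paragraph: one must verify that, among all the parameters in Propositions \ref{prop1} and \ref{prop3}, only $\alpha_1$ and $\beta_2$ preserve the grading, while every other $\alpha_j,\beta_j$ contributes a strictly grade-raising summand. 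This bookkeeping is the main (though modest) obstacle; the rest of the argument is a clean two-dimensional linear algebra exercise.
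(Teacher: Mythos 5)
Your proof is correct, and it is essentially the argument the paper intends: the corollary is stated there with no proof at all (``we can immediately obtain\ldots''), and your decomposition of an arbitrary derivation into the $(\alpha_1,\beta_2)$-diagonal part plus a strictly index-raising remainder, combined with the observation that the remainder maps $\mathrm{span}\{e_m,e_{m+1},\dots\}$ into $\mathrm{span}\{e_{m+1},e_{m+2},\dots\}$ while any nonzero diagonal combination fixes a line through $e_1$ or $e_2$ (resp.\ $f_1$ or $f_2$), is exactly the bookkeeping from Propositions \ref{prop1} and \ref{prop3} that makes ``immediately'' honest. The one point worth noting is that for the existence half you only need the two pure diagonal derivations $D_1,D_2$ (all other parameters zero), which are genuine derivations by those propositions, so your step (ii) need not be extended to derivations with a nonzero raising part.
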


We denote by ${M}_1$ and $M_2$ pro-solvable Lie algebras with the maximal pro-nilpotent ideals $\mathbf{n}_1$ and $\mathbf{n}_2$, respectively. The complementary subspaces of ${M}_1$ and $M_2$ to $\mathbf{n}_1$ and $\mathbf{n}_2$ are denoted by $Q_1$ and $Q_2$, respectively.

\begin{lem}
The derivations $ad_{x}$ and $ad_y$ are non-potentially nilpotent for any $x\in Q_1$ and $y\in Q_2$, respectively.
\end{lem}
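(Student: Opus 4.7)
The plan is to argue by contradiction, combining the explicit description of derivations obtained in Propositions~\ref{prop1} and~\ref{prop3} with the maximality of $\mathbf{n}_1$ (respectively $\mathbf{n}_2$) as a pro-nilpotent ideal. I will present the argument for $x\in Q_1$; the case $y\in Q_2$ is completely analogous, using Proposition~\ref{prop3} in place of Proposition~\ref{prop1}.

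The first step is a reduction. Since $\mathbf{n}_1$ is an ideal of $M_1$, it is $ad_x$-invariant, and therefore $\mathrm{Im}\bigl((ad_x|_{\mathbf{n}_1})^k\bigr)\subset \mathrm{Im}(ad_x^k)$ for every $k\geq 1$. Consequently $\bigcap_k \mathrm{Im}\bigl((ad_x|_{\mathbf{n}_1})^k\bigr)\subset \bigcap_k \mathrm{Im}(ad_x^k)$, so it suffices to show that the restriction $ad_x|_{\mathbf{n}_1}$, viewed as a derivation of $\mathbf{n}_1$, is not potentially nilpotent.

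By Proposition~\ref{prop1}, $ad_x|_{\mathbf{n}_1}$ is parametrised by scalars $\alpha_i,\beta_j$; among these only the pair $(\alpha_1,\beta_2)$ contributes to the degree-preserving component with respect to the natural grading of $\mathbf{n}_1$, while the remaining parameters yield strictly grade-raising ``shift'' derivations which are locally nilpotent and, individually, potentially nilpotent. If $(\alpha_1,\beta_2)=(0,0)$, then $ad_x|_{\mathbf{n}_1}$ itself is potentially nilpotent, and a direct verification shows that $\mathbf{n}_1+\mathbb{C}x$ is then a pro-nilpotent ideal of $M_1$ strictly containing $\mathbf{n}_1$, contradicting maximality. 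Thus $(\alpha_1,\beta_2)\neq(0,0)$, and the degree-zero component of $ad_x|_{\mathbf{n}_1}$ is the diagonal operator whose eigenvalues $(i-1)\beta_2+i\alpha_1$, $i\beta_2+(i-1)\alpha_1$, $i(\beta_2+\alpha_1)$ on $e_{3i-2},e_{3i-1},e_{3i}$ are nonzero for all sufficiently large $i$.

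It remains to deduce non-potential-nilpotency of $ad_x|_{\mathbf{n}_1}$ from the non-triviality of its degree-zero part. I would pass to the associated graded $\mathrm{gr}\,\mathbf{n}_1$: the induced endomorphism is precisely the diagonal operator above, which preserves every eigenvector with nonzero eigenvalue, so $\bigcap_k \mathrm{Im}(\overline{ad_x}^k)\neq 0$. A lifting argument using the upper-triangular (grade-raising) structure of $ad_x|_{\mathbf{n}_1}$ then produces a nonzero element of $\bigcap_k \mathrm{Im}\bigl((ad_x|_{\mathbf{n}_1})^k\bigr)$. The main obstacle is making this final lifting rigorous in the infinite-dimensional setting, where the inductive construction of preimages must be shown to terminate (or at least stabilise) in $\mathbf{n}_1$ itself rather than in a completion; this is exactly where the linear growth of the diagonal eigenvalues prevents resonances and keeps the recursion under control.
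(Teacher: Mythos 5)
Your overall strategy coincides with the paper's: both arguments rest on the observation, drawn from Proposition~\ref{prop1} (resp.\ Proposition~\ref{prop3}), that only the parameters $\alpha_1,\beta_2$ contribute a degree-preserving component to $ad_x|_{\mathbf{n}_1}$, and both invoke the maximality of $\mathbf{n}_1$ to exclude $\alpha_1=\beta_2=0$ (your observation that $\mathbf{n}_1+\mathbb{C}x$ would then be pro-nilpotent is exactly the paper's verification that $\bigcap_i M^i=0$ for $M=\mathbf{n}_1+\mathbb{F}x$). The only structural difference is that the paper argues by contradiction: it assumes $ad_x$ potentially nilpotent, asserts in a single sentence that this forces $\alpha_1=\beta_2=0$, and then derives the contradiction; you run the argument forward and isolate that assertion as a separate implication, namely that $(\alpha_1,\beta_2)\neq(0,0)$ forces $\bigcap_k\mathrm{Im}(ad_x^k)\neq 0$. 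The step you flag as the main obstacle is therefore precisely the step the paper dispatches without justification.

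That gap is genuine, and the repair you sketch (pass to the associated graded and lift) does not work as stated. In the finite-support setting a nonzero degree-zero part does not by itself guarantee $\bigcap_k\mathrm{Im}(d^k)\neq 0$: take $V=\bigoplus_{i\geq 1}\mathbb{C}v_i$ and $d(v_i)=i(v_i+v_{i+1})$. Its degree-zero part is diagonal with eigenvalues $i\neq 0$, yet $d$ strictly raises the top index of every nonzero vector, so no nonzero vector supported in $\{v_1,\dots,v_n\}$ lies in $\mathrm{Im}(d^k)$ once $k>n$, and hence $\bigcap_k\mathrm{Im}(d^k)=0$. Operators of exactly this shape occur among the derivations of Proposition~\ref{prop1} (for instance $\alpha_1=\alpha_4=1$ and all other parameters zero, restricted to the invariant tower $\{e_{3i-2}\}$), so the implication cannot follow from the nonvanishing of the degree-zero part alone: the recursion you describe for constructing preimages produces elements of the completion rather than of $\mathbf{n}_1$, and the linear growth of the eigenvalues does not rescue it. To close the argument one must use more specific structure --- for example exhibit an honest eigenvector of $ad_x$ with nonzero eigenvalue (as happens for $e_1$ in the normalized algebras $R_{\mathbf{n}_1}(\alpha)$, where the shift coefficients carry the factor $i-1$ and vanish in the lowest degree), or else combine the argument with the Jacobi/pro-solvability constraints on $M_1$ that eliminate the problematic parameter configurations. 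As it stands, both your write-up and the paper's leave this point open.
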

\begin{proof} Let $x\in Q_1$ and $ad_{x}$ is potentially nilpotent, i.e., $\cap_{k=1}^{\infty}Im\empty\ ad_{x}^{k}=0$. Set $M=\mathbf{n}_1+\mathbb{F}x$.
From Proposition \ref{prop1} we conclude that $ad_{x}=d$ for some $d\in Der(\mathbf{n}_1)$. The condition $\cap_{k=1}^{\infty}Im\empty\ ad_{x}^{k}=0$
implies that $\alpha_{1}=\beta_{2}=0$. Hance, we have

$$\begin{array}{lll} [x,e_{3i-2}] & = & -\alpha_3e_{3i}-\sum\limits_{k=2}^{t}(((i-1)\beta_{3k-1}+i\alpha_{3k-2})e_{3k+3i-5}
+\alpha_{3k}e_{3k+3i-3}),\\
\left[x,e_{3i-1}\right]&=&-\beta_3e_{3i}-\sum\limits_{k=2}^{t}(i\beta_{3k-1}+ (i-1)\alpha_{3k-2})e_{3k+3i-4}+\beta_{3k}e_{3k+3i-3}),\\
\left[x,e_{3i}\right]& = & \beta_3e_{3i+1}+\alpha_3e_{3i+2}-\sum\limits_{k=2}^{t}(i(\beta_{3k-1}+\alpha_{3k-2})e_{3k+3i-3}
-\beta_{3k}e_{3k+3i-2}-\alpha_{3k}e_{3k+3i-1}),\\
\end{array}$$ where  $i\in \mathbb{N}.$

One can easily verify that $\cap_{i=1}^{\infty}(M)^{i}=0$. This implies that $M$ is potentially nilpotent which is a contraduction and it shows that the assumption is wrong.

The same manner one can prove the non-potentially nilpotentness of $ad_y$ for $y\in Q_2$ referring to Proposition \ref{prop3}.
\end{proof}

From the lemma above we obtain an immidiate corollary on maximal dimensions of the subspaces $Q_1$ va $Q_2$.

\begin{cor}
The dimensions of $Q_1$ and $Q_2$ are not greater than the maximal number of potentially nil-independent derivations of
$\mathbf{n}_1$ and $\mathbf{n}_2$, respectively.
\end{cor}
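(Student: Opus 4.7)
The plan is to pass from $Q_i$ to the derivation algebra $\Der(\mathbf{n}_i)$ through the adjoint action and then invoke the preceding Corollary that bounds the number of potentially nil-independent derivations by two.

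First I would fix a basis $\{x_1,\dots,x_n\}$ of $Q_i$ and set $d_j:=ad_{x_j}|_{\mathbf{n}_i}$. Since $\mathbf{n}_i$ is an ideal in $M_i$, each $d_j$ is a well-defined element of $\Der(\mathbf{n}_i)$, and the assignment $x\mapsto ad_x|_{\mathbf{n}_i}$ is $\mathbb{F}$-linear. Consequently, for any scalars $\alpha_1,\dots,\alpha_n\in\mathbb{F}$ one has
$$\sum_{j=1}^{n}\alpha_j d_j \;=\; ad_{\sum_{j=1}^{n}\alpha_j x_j}\Big|_{\mathbf{n}_i}.$$

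Next I would apply the preceding Lemma. If $(\alpha_1,\dots,\alpha_n)\neq 0$, then $x:=\sum_{j=1}^{n}\alpha_j x_j$ is a nonzero element of $Q_i$, and the Lemma states that $ad_x$ is not potentially nilpotent on $\mathbf{n}_i$. Hence no nontrivial linear combination of $d_1,\dots,d_n$ is potentially nilpotent, which is precisely the defining condition for $d_1,\dots,d_n$ to be a potentially nil-independent family in $\Der(\mathbf{n}_i)$. Combining this with the previous Corollary, which asserts that at most two derivations of $\mathbf{n}_i$ can be potentially nil-independent, yields $\dim Q_i = n \leq 2$.

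I do not foresee a serious obstacle: the only point requiring a brief sanity check is the passage from $ad_x$ as an operator on $M_i$ to its restriction to $\mathbf{n}_i$. This restriction is automatically a derivation of $\mathbf{n}_i$ because $\mathbf{n}_i$ is an ideal, and its non-potential-nilpotence is the exact content of the previous Lemma, so the reduction to the Corollary is essentially tautological and the same argument applies uniformly to both $Q_1$ and $Q_2$.
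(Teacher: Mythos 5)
Your proposal is correct and takes essentially the same route as the paper, which states this as an immediate consequence of the preceding lemma: a basis of $Q_i$ yields, via $x\mapsto ad_x|_{\mathbf{n}_i}$, a family of derivations whose nontrivial linear combinations are non-potentially-nilpotent by that lemma, hence a potentially nil-independent family, so $\dim Q_i$ is bounded by the maximal size of such a family.
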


In the theorem below we describe pro-solvable algebras with maximal pro-nilpotent ideal $\mathbf{n}_1$. Evidently, in this case $dimQ_1=2$.

\begin{thm} Let $M$ be a maximal potential solvable Lie algebra whose maximal potential nilpotent ideal is $\mathbf{n}_1$. Then there is
a basis $\{x,y,e_{1},e_{2},...\}$, such that the multiplication table of $M$ in this basis is given as follows
$$M=R_{\mathbf{n}_1}(\alpha):=\left\{\begin{array}{lll}
[e_{3i-2},x]&=&ie_{3i-2}+\sum\limits_{k=2}^{t}(i-1)\alpha_{k}e_{3k+3i-5},\\[1mm]
[e_{3i-1},x]&=&(i-1)e_{3i-1}+\sum\limits_{k=2}^{t}i\alpha_{k}e_{3k+3i-4},\\[1mm]
[e_{3i},x]&=&ie_{3i}+\sum\limits_{k=2}^{t}i\alpha_{k}e_{3k+3i-3},\\[1mm]
[e_{3i-2},y]&=&-e_{3i-2}+\sum\limits_{k=2}^{t}\alpha_{k}e_{3k+3i-5}\\[1mm]
[e_{3i-1},y]&=&e_{3i-1}-\sum\limits_{k=2}^{t}\alpha_{k}e_{3k+3i-4},\\[1mm]
[x,y]&=&\sum\limits_{k=2}^{t}(1-k)\alpha_{k}e_{3k}.\\[1mm]
\end{array}\right.$$
where $\alpha_{3i-2}\alpha_{3k-2}=0,\quad 2\leq i,k\leq t,\ 3k+3i-4\geq 3t,\ i,t \in \mathbb{N}, \quad \alpha=\{\alpha_2,\dots,\alpha_t\}$.
\end{thm}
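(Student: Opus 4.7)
The plan is to use Proposition \ref{prop1} to parametrize the adjoint action of $Q_1$ on $\mathbf{n}_1$, then to exploit the two sources of gauge freedom (a linear change of basis in $Q_1$ and translation by elements of $\mathbf{n}_1$) to reduce the description to a single family $\{\alpha_k\}_{k=2}^{t}$, and finally to impose the Jacobi identity in order to recover $[x,y]$ together with the quadratic constraints on the $\alpha_k$.

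First I would pick any basis $\{x,y\}$ of $Q_1$. Since $\mathbf{n}_1$ is an ideal of $M$, the restrictions $\mathrm{ad}_x|_{\mathbf{n}_1}$ and $\mathrm{ad}_y|_{\mathbf{n}_1}$ are derivations of $\mathbf{n}_1$, hence, by Proposition \ref{prop1}, each is specified by a quadruple of scalar sequences $(\alpha_{3k-2},\beta_{3k-1},\alpha_{3k},\beta_{3k})$. The proof of the preceding lemma already shows that such a derivation is potentially nilpotent precisely when $\alpha_1=\beta_2=0$, so the potential nil-independence of $\{x,y\}$ forces the $2{\times}2$ matrix formed by the diagonal parameters $(\alpha_1,\beta_2)$ of $x$ and of $y$ to be invertible.

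Next I would perform two normalizations. A linear substitution inside $Q_1$ brings this $2{\times}2$ matrix to the canonical form dictated by the statement, so that the eigenvalues of $\mathrm{ad}_x$ on $\{e_{3i-2},e_{3i-1},e_{3i}\}$ become $(i,i-1,i)$ and those of $\mathrm{ad}_y$ become $(-1,1,0)$. The remaining parameters can then be partially absorbed by translations $x\mapsto x+u$, $y\mapsto y+v$ with $u,v\in\mathbf{n}_1$: each such replacement shifts $\mathrm{ad}_x$ by the inner derivation $\mathrm{ad}_u$, and a direct comparison of Proposition \ref{prop1} with the action of each $\mathrm{ad}_{e_j}$ identifies exactly those parameters that correspond to inner derivations and can therefore be killed. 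A case analysis modulo $3$ shows that only the single family $\{\alpha_k\}_{k=2}^{t}$ listed in the statement survives this reduction.

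To determine $[x,y]$, I would use that $[\mathrm{ad}_x,\mathrm{ad}_y]$ is again a derivation with vanishing diagonal part (the semisimple parts of $\mathrm{ad}_x$ and $\mathrm{ad}_y$ act diagonally on the same weight decomposition and thus commute), hence coincides with $\mathrm{ad}_z$ for some $z\in\mathbf{n}_1$; the maximality of $\mathbf{n}_1$ as a pro-nilpotent ideal allows one to set $[x,y]=z$. Equating $\mathrm{ad}_{[x,y]}=[\mathrm{ad}_x,\mathrm{ad}_y]$ on each $e_j$ and solving for $z$ produces the closed form $\sum_{k=2}^{t}(1-k)\alpha_k e_{3k}$. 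The quadratic relations $\alpha_{3i-2}\alpha_{3k-2}=0$ then arise by imposing the same derivation identity at the indices where $[\mathrm{ad}_x,\mathrm{ad}_y]$ creates a term of total weight $\geq 3t$: this term must equal the one computed from $\mathrm{ad}_{[x,y]}$ using the truncated $\{\alpha_k\}$, and the mismatch forces the stated bilinear products to vanish.

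The principal obstacle is the bookkeeping of the normalization step, where one must identify, case by case in the residues modulo $3$, which parameters of Proposition \ref{prop1} correspond to inner derivations $\mathrm{ad}_{e_j}$ and therefore can be eliminated; a secondary difficulty is tracking exactly where the quadratic obstruction first surfaces in the Jacobi comparison at indices near the truncation $3t$.
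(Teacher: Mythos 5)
Your plan is correct and follows essentially the same route as the paper: parametrize $\mathrm{ad}_x|_{\mathbf{n}_1}$ and $\mathrm{ad}_y|_{\mathbf{n}_1}$ via Proposition \ref{prop1}, normalize the diagonal parameters by a linear substitution in $Q_1$ and absorb the inner-derivation parameters by translating $x,y$ by elements of $\mathbf{n}_1$, then use the Jacobi identity (equivalently $\mathrm{ad}_{[x,y]}=[\mathrm{ad}_x,\mathrm{ad}_y]$) to pin down $[x,y]$ and extract the quadratic constraints $\alpha_{3i-2}\alpha_{3k-2}=0$ near the truncation index. Your operator-level derivation of $[x,y]$ is a mild repackaging of the paper's explicit Jacobi computations on $e_1,e_2,e_3$, but it is the same argument.
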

\begin{proof} By using Proposition \ref{prop1} for $i\in \mathbb{N}$ we get
$$\begin{array}{lll}
[e_{3i-2},x]&=&ie_{3i-2}+\alpha_3e_{3i}+\sum\limits_{k=2}^{t}(((i-1)\beta_{3k-1}+i\alpha_{3k-2})e_{3k+3i-5}+\alpha_{3k}e_{3k+3i-3}),\\[1mm]
[e_{3i-1},x]&=&(i-1)e_{3i-1}+\beta_3e_{3i}+\sum\limits_{k=2}^{t}((i\beta_{3k-1}+ (i-1)\alpha_{3k-2})e_{3k+3i-4}+\beta_{3k}e_{3k+3i-3}),\\[1mm]
[e_{3i},x]&=&ie_{3i}-\beta_3e_{3i+1}-\alpha_3e_{3i+2}+\sum\limits_{k=2}^{t}(i(\beta_{3k-1}+
\alpha_{3k-2})e_{3k+3i-3}-\beta_{3k}e_{3k+3i-2}-\alpha_{3k}e_{3k+3i-1}),\\[1mm]
[e_{3i-2},y]&=&(i-1)e_{3i-2}+\alpha_3'e_{3i}+\sum\limits_{k=2}^{t}(((i-1)\beta_{3k-1}'+i\alpha_{3k-2}')e_{3k+3i-5}+\alpha_{3k}'e_{3k+3i-3}),\\[1mm]
[e_{3i-1},y]&=&ie_{3i-1}+\beta_3'e_{3i}+\sum\limits_{k=2}^{t}((i\beta_{3k-1}'+ (i-1)\alpha_{3k-2}')e_{3k+3i-4}+\beta_{3k}'e_{3k+3i-3}),\\[1mm]
[e_{3i},y]&=&ie_{3i}-\beta_3'e_{3i+1}-\alpha_3'e_{3i+2}+\sum\limits_{k=2}^{t}(i(\beta_{3k-1}'+
\alpha_{3k-2}')e_{3k+3i-3}-\beta_{3k}'e_{3k+3i-2}-\alpha_{3k}'e_{3k+3i-1}),\\[1mm]
[x,y]&=&\sum\limits_{k=1}^{s}\gamma_{3k-2}e_{3k-2}+\sum\limits_{k=1}^{s}\gamma_{3k-1}e_{3k-1}
+\sum\limits_{k=1}^{s}\gamma_{3k}e_{3k}+\delta_1x+\delta_2y.\\[1mm]
\end{array}$$

Let us consider the following base change
$$x'=x-\sum\limits_{k=2}^{t}\alpha_{3k-2}e_{3k-3}+\sum\limits_{k=1}^{t}(-\beta_{3k}e_{3k-2}+\alpha_{3k}e_{3k-1}), \quad
y'=y+\sum\limits_{k=2}^{t}\beta_{3k-1}'e_{3k-3}+\sum\limits_{k=1}^{t}(-\beta_{3k}'e_{3k-2}+\alpha_{3k}'e_{3k-1}).$$

Introduce the notations $\alpha_{3k-2}:=\beta_{3k-1}+ \alpha_{3k-2},\ \ \beta_{3k-1}:=\beta_{3k-1}'+ \alpha_{3k-2}'$.

On the one hand one has
$$\begin{array}{lll} [e_1,[x,y]]&=&[[e_1,x],y]-[[e_1,y],x]=[e_{1},y]-[\sum\limits_{i=2}^{t}\beta_{3i-1}e_{3i-2},x]\\
&=&\sum\limits_{i=2}^{t}\beta_{3i-1}e_{3i-2}-\sum\limits_{i=2}^{t}\beta_{3i-1}\Big(ie_{3i-2}
+\sum\limits_{k=2}^{t}(i-1)\alpha_{3k-2}e_{3k+3i-5}\Big)\\
&=&\sum\limits_{i=2}^{t}(1-i)\beta_{3i-1}e_{3i-2}-\sum\limits_{i=2}^{t}\sum\limits_{k=2}^{t}(i-1)
\beta_{3i-1}\alpha_{3k-2}e_{3k+3i-5},\\
\end{array}$$
From the other hand we get
$$\begin{array}{lll}[e_1,[x,y]]&=&[e_1,\sum\limits_{k=1}^{t}\gamma_{3k-2}e_{3k-2}
+\sum\limits_{k=1}^{t}\gamma_{3k-1}e_{3k-1}+\sum\limits_{k=1}^{t}\gamma_{3k}e_{3k}+\delta_1x+\delta_2y]\\
&=&\sum\limits_{k=1}^{t}\gamma_{3k-1}e_{3k}+\sum\limits_{k=1}^{s}\gamma_{3k}e_{3k+1}+\delta_1e_{1}
+\delta_2\sum\limits_{i=2}^{t}\beta_{3i-1}e_{3i-2}.\\
\end{array}$$
Writing the Jacobi identity for the vectors $\{e_2,x,y\}$ we obtain
$$\gamma_{3k-2}=\gamma_{3k-1}=\delta_1=\delta_2=0.$$

Let now consiider the identity
$$\begin{array}{lll}0&=&[e_3,[x,y]]=[[e_3,x],y]-[[e_3,y],x]=[e_{3}+\sum\limits_{i=2}^{t}\alpha_{3i-2}e_{3i},y]-[e_{3}
+\sum\limits_{i=2}^{t}\beta_{3i-1}e_{3i},x]\\
&=&e_{3}+\sum\limits_{i=2}^{t}\beta_{3i-1}e_{3i}+\sum\limits_{i=2}^{t}\alpha_{3i-2}\Big(ie_{3i}
+\sum\limits_{k=2}^{t}i\beta_{3k-1}e_{3k+3i-3}\Big)\\
&&\hfill -e_{3}-\sum\limits_{i=2}^{t}\alpha_{3i-2}e_{3i}-\sum\limits_{i=2}^{t}\beta_{3i-1}\Big(ie_{3i}
+\sum\limits_{k=2}^{t}i\alpha_{3k-2}e_{3k+3i-3}\Big)\\
&=&\sum\limits_{i=2}^{t}(1-i)(\beta_{3i-1}-\alpha_{3i-2})e_{3i}
+\sum\limits_{i=2}^{t}\sum\limits_{k=2}^{t}i\Big(\alpha_{3i-2}\beta_{3k-1}
-\beta_{3i-1}\alpha_{3k-2}\Big)e_{3k+3i-3}.\end{array}$$

From where we get $\beta_{3i-1}=\alpha_{3i-2}$.

Thus we obtain
$$\gamma_{3i}=(1-i)\alpha_{3i-2}, \quad \alpha_{3i-2}\alpha_{3k-2}=0,\quad 2\leq i,k\leq t,\ 3k+3i-4\geq 3t,$$
Taking into account all the above and considering the base change $y'=y-x$ we come to $R_{\mathbf{n}_1}(\alpha)$.
\end{proof}

Pro-solvable Lie algebras with maximal pro-nilpotent ideal is $\mathbf{n}_2$ can be described similarly. In this case also $dim Q_2=2$. The result is given by the following theorem.

\begin{thm} Let $M$ be an maximal potential solvable Lie algebra whose maximal by including potential nilpotent ideal is $\mathbf{n}_2$. Then there is a basis $\{x,y,f_{1},f_{2},...\}$ such that the multiplication table of $M$ on this basis is given as follows
$$M=R_{\mathbf{n}_2}(\beta):=\left\{\begin{array}{lll}
[f_{8i+1},x]=f_{8i+1},\\[1mm]
[f_{8i+2},x]=-2f_{8i+2},\\[1mm]
[f_{8i+3},x]=-f_{8i+3},\\[1mm]
[f_{8i+5},x]=f_{8i+5},\\[1mm]
[f_{8i+6},x]=2f_{8i+6},\\[1mm]
[f_{8i+7},x]=-f_{8i+7}.\\[1mm]
\end{array} \quad
\begin{array}{lll}
[f_{8i+1},y]=&2if_{8i+1}+\sum\limits_{k=1}^{t}2i\beta_{k}f_{8i+8k+1},\\[1mm]
[f_{8i+2},y]=&(2i+1)f_{8i+2}+\sum\limits_{k=1}^{t}(2i+1)\beta_{k}f_{8i+8k+2},\\[1mm]
[f_{8i+3},y]=&(2i+1)f_{8i+3}+\sum\limits_{k=1}^{t}(2i+1)\beta_{k}f_{8i+8k+3},\\[1mm]
[f_{8i+4},y]=&(2i+1)f_{8i+4}+\sum\limits_{k=1}^{t}(2i+1)\beta_{k}f_{8i+8k+4},\\[1mm]
[f_{8i+5},y]=&(2i+1)f_{8i+5}+\sum\limits_{k=1}^{t}(2i+1)\beta_{k}f_{8i+8k+5},\\[1mm]
[f_{8i+6},y]=&(2i+1)f_{8i+6}+\sum\limits_{k=1}^{t}(2i+1)\beta_{k}f_{8i+8k+6},\\[1mm]
[f_{8i+7},y]=&(2i+2)f_{8i+7}+\sum\limits_{k=1}^{t}(2i+2)\beta_{k}f_{8i+8k+7},\\[1mm]
[f_{8i+8},y]=&(2i+2)f_{8i+8}+\sum\limits_{k=1}^{t}(2i+2)\beta_{k}f_{8i+8k+8},\\[1mm]
\end{array}\right.$$
\hfill where $i,t\in \mathbb{N}, \quad \beta=\{\beta_1,\dots,\beta_t\}.$
\end{thm}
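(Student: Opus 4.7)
The overall plan mirrors the proof of the preceding theorem (for $\mathbf{n}_1$) but with the richer $8$-periodic block structure of $\mathbf{n}_2$. By the corollary, the maximal number of potentially nil-independent derivations of $\mathbf{n}_2$ is $2$, and by the preceding lemma each element of $Q_2$ acts non-potentially-nilpotently on $\mathbf{n}_2$. Thus $\dim Q_2 = 2$, and we may choose a basis $\{x,y\}$ of $Q_2$. The restrictions $\operatorname{ad}_x|_{\mathbf{n}_2}$ and $\operatorname{ad}_y|_{\mathbf{n}_2}$ are derivations of $\mathbf{n}_2$, and Proposition \ref{prop3} gives their full parametric form in terms of two sequences of scalars $\{\alpha_{8k+r}\},\{\beta_{8k+r}\}$ each, which I will denote by primed scalars for $\operatorname{ad}_y$.

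My first step is to reduce the number of free parameters by exploiting base changes of the complementary part. The substitutions
$$x'=x-\sum_{k\geq 1}\bigl(\text{corrections in } f_{8k+r}\bigr),\qquad y'=y-\sum_{k\geq 1}\bigl(\text{corrections in } f_{8k+r}\bigr)$$
can absorb every off-diagonal summand (i.e., every contribution in which the $f$-index is strictly greater than the $f$-index being differentiated), exactly as in the $\mathbf{n}_1$ case. After this normalization $\operatorname{ad}_x$ and $\operatorname{ad}_y$ act on each $f_{8i+r}$ as a scalar plus the canonical tails in $\sum_k \beta_k f_{8i+8k+r}$ that appear in the statement. Using the remaining freedom of a $GL_2$ change of basis on $\{x,y\}$, combined with nil-independence, I normalize so that $\operatorname{ad}_x$ has the eigenvalue pattern $(1,-2,-1,0,1,2,-1,0)$ on the eight types of basis vectors (reading off the ``diagonal'' coefficients of the first derivation) and $\operatorname{ad}_y$ has the $i$-dependent pattern $(2i,2i+1,\dots,2i+2)$ required by $R_{\mathbf{n}_2}(\beta)$. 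The key point is that the eigenvalues in the first column are forced by the commutation rules of $\mathbf{n}_2$ (the off-diagonal defining brackets $[f_p,f_q]=d_{p,q}f_{p+q}$ must be preserved by $\operatorname{ad}_x$) while the eigenvalues in the second column are the unique independent pattern surviving after $x$ has been chosen.

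Next I write $[x,y]=\sum_{k}\gamma_k f_k+\delta_1 x+\delta_2 y$ and exploit the Jacobi identity
$$[f_j,[x,y]] = [[f_j,x],y]-[[f_j,y],x]$$
for the generators $f_1, f_2$ (and a few more low-index vectors) to pin down these scalars. Because the actions of $x$ and $y$ on each $f_j$ are already known, the right-hand side can be computed explicitly and matched coefficient-by-coefficient on the left-hand side. This forces $\delta_1=\delta_2=0$ and $\gamma_k=0$ for all indices of types that are not allowed to appear. A final base change of the form $y \mapsto y-cx$ (for a suitable constant $c$ depending on the residual parameters) kills the only potentially surviving summand in $[x,y]$, yielding $[x,y]=0$ and the multiplication table of $R_{\mathbf{n}_2}(\beta)$ exactly as stated, with $\beta_k$ being the surviving tail coefficients in $\operatorname{ad}_y$.

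The main obstacle is not conceptual but bookkeeping: the $8$-periodic structure of $\mathbf{n}_2$ means each Jacobi relation must be checked separately on each of the eight residue classes modulo $8$, and the tails $\sum_k \beta_k f_{8i+8k+r}$ interact non-trivially across classes through the multiplication table of $\mathbf{n}_2$. In particular, verifying that no extra constraints on $\beta=\{\beta_1,\dots,\beta_t\}$ arise beyond those captured by $R_{\mathbf{n}_2}(\beta)$ — that is, that the base-changed $y$ really is a derivation and that $[x,y]=0$ is consistent — requires comparing pairs of entries of the $8\times 8$ multiplication table from the introduction and confirming the non-obvious cancellations they produce. Once these block-by-block checks are performed (they proceed precisely as in Proposition \ref{prop3}), the theorem follows.
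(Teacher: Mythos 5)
Your proposal follows essentially the same route as the paper: invoke Proposition \ref{prop3} for $\operatorname{ad}_x$ and $\operatorname{ad}_y$, absorb the removable off-diagonal terms by a change of basis on $Q_2$, impose the Jacobi identities $[f_j,[x,y]]=[[f_j,x],y]-[[f_j,y],x]$ for $j=1,2$ to force $\delta_1=\delta_2=0$ and the vanishing of the $[x,y]$-coefficients, and finish with a linear substitution on $\{x,y\}$. The only small mismatch is in the role of the final substitution: in the paper $[x,y]=0$ already follows from the Jacobi computations, and the last change ($x'=x-2y$, exploiting $\beta_{1,8k}=2\beta_{2,8k}$) serves to strip the tail from the $x$-action and produce the eigenvalue pattern $(1,-2,-1,0,1,2,-1,0)$, rather than to kill $[x,y]$ — but this does not affect the correctness of the argument.
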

\begin{proof} We use Proposition \ref{prop3} to get the following equalities
{\small
$$\begin{array}{lll}
[f_{8i+1},x]=&(4i+1)f_{8i+1}+\sum\limits_{k=1}^{t}((4i+1)\alpha_{1,8k+1}+2i\beta_{1,8k+2})f_{8i+8k+1}
+\sum\limits_{k=0}^{t}(\alpha_{1,8k+3}f_{8i+8k+3}\\[1mm]
             &\hfill +\alpha_{1,8k+4}f_{8i+8k+4}+\alpha_{1,8k+5}f_{8i+8k+5}-2\beta_{1,8k+7}f_{8i+8k+6}+\alpha_{1,8k+8}f_{8i+8k+8}),\\[1mm]
[f_{8i+2},x]=&4if_{8i+2}+\sum\limits_{k=1}^{t}(4i\alpha_{1,8k+1}+(2i+1)\beta_{1,8k+2})f_{8i+8k+2}\\[1mm]
             &\hfill+\sum\limits_{k=0}^{t}(\beta_{1,8k+3}f_{8i+8k+3}+\beta_{1,8k+7}f_{8i+8k+7}+\beta_{1,8k+8}f_{8i+8k+8}).\\[1mm]
[f_{8i+3},x]=&(4i+1)f_{8i+3}+\sum\limits_{k=1}^{t}((4i+1)\alpha_{1,8k+1}+(2i+1)\beta_{1,8k+2})f_{8i+8k+3}\\[1mm]
             &\hfill+\sum\limits_{k=0}^{t}(\beta_{1,8k+3}f_{8i+8k+4}-\alpha_{1,8k+5}f_{8i+8k+7}-\beta_{1,8k+7}f_{8i+8k+8}-\beta_{1,8k+8}f_{8i+8k+9}-2\alpha_{1,8k+8}f_{8i+8k+10}).\\[1mm]
[f_{8i+4},x]=&(4i+2)f_{8i+4}+\sum\limits_{k=1}^{t}((4i+2)\alpha_{1,8k+1}+(2i+1)\beta_{1,8k+2})f_{8i+8k+4}\\[1mm]
             &\hfill+\sum\limits_{k=0}^{t}(-3\beta_{1,8k+3}f_{8i+8k+5}-3\alpha_{1,8k+4}f_{8i+8k+7}+3\beta_{1,8k+7}f_{8i+8k+9}-3\alpha_{1,8k+8}f_{8i+8k+11}).\\[1mm]
[f_{8i+5},x]=&(4i+3)f_{8i+5}+\sum\limits_{k=1}^{t}((4i+3)\alpha_{1,8k+1}+(2i+1)\beta_{1,8k+2})f_{8i+8k+5}
+\sum\limits_{k=0}^{t}(-2\beta_{1,8k+3}f_{8i+8k+6}\\[1mm]
             &\hfill-\alpha_{1,8k+3}f_{8i+8k+7}+\alpha_{1,8k+4}f_{8i+8k+8}+\alpha_{1,8k+5}f_{8i+8k+9}+\alpha_{1,8k+8}f_{8i+8k+12}).\\[1mm]
[f_{8i+6},x]=&(4i+4)f_{8i+6}+\sum\limits_{k=1}^{t}((4i+4)\alpha_{1,8k+1}+(2i+1)\beta_{1,8k+2})f_{8i+8k+6}\\[1mm]
             &\hfill+\sum\limits_{k=0}^{t}(\alpha_{1,8k+3}f_{8i+8k+8}-\alpha_{1,8k+4}f_{8i+8k+9}+\alpha_{1,8k+8}f_{8i+8k+13}).\\[1mm]
[f_{8i+7},x]=&(4i+3)f_{8i+7}+\sum\limits_{k=1}^{t}((4i+3)\alpha_{1,8k+1}+(2i+2)\beta_{1,8k+2})f_{8i+8k+7}
+\sum\limits_{k=0}^{t}(\beta_{1,8k+3}f_{8i+8k+8}\\[1mm]
             &\hfill+2\alpha_{1,8k+4}f_{8i+8k+10}-\alpha_{1,8k+5}f_{8i+8k+11}-\beta_{1,8k+7}f_{8i+8k+12}+\beta_{1,8k+8}f_{8i+8k+13}).\\[1mm]
[f_{8i+8},x]=&(4i+4)f_{8i+8}+\sum\limits_{k=0}^{t}((4i+4)\alpha_{1,8k+1}+(2i+2)\beta_{1,8k+2})f_{8i+8k+8}
+\sum\limits_{k=0}^{t}(-\beta_{1,8k+3}f_{8i+8k+9}\\[1mm]
             &\hfill-2\alpha_{1,8k+3}f_{8i+8k+10}-\alpha_{1,8k+4}f_{8i+8k+11}+\beta_{1,8k+7}f_{8i+8k+13}
             -2\beta_{1,8k+8}f_{8i+8k+14}-\alpha_{1,8k+8}f_{8i+8k+15}).\\[1mm]
\end{array}$$}
{\small
$$\begin{array}{lll}
[f_{8i+1},y]=&2if_{8i+1}+\sum\limits_{k=1}^{t}((4i+1)\alpha_{2,8k+1}+2i\beta_{2,8k+2})f_{8i+8k+1}
+\sum\limits_{k=0}^{t}(\alpha_{2,8k+3}f_{8i+8k+3}\\[1mm]
             &\hfill+\alpha_{2,8k+4}f_{8i+8k+4}+\alpha_{2,8k+5}f_{8i+8k+5}-2\beta_{2,8k+7}f_{8i+8k+6}+\alpha_{2,8k+8}f_{8i+8k+8}),\\[1mm]
[f_{8i+2},y]=&(2i+1)f_{8i+2}+\sum\limits_{k=1}^{t}(4i\alpha_{2,8k+1}+(2i+1)\beta_{2,8k+2})f_{8i+8k+2}\\[1mm]
             &\hfill+\sum\limits_{k=0}^{t}(\beta_{2,8k+3}f_{8i+8k+3}+\beta_{2,8k+7}f_{8i+8k+7}
             +\beta_{2,8k+8}f_{8i+8k+8}).\\[1mm]
[f_{8i+3},y]=&(2i+1)f_{8i+3}+\sum\limits_{k=1}^{t}((4i+1)\alpha_{2,8k+1}+(2i+1)\beta_{2,8k+2})f_{8i+8k+3}\\[1mm]
             &\hfill+\sum\limits_{k=0}^{t}(\beta_{2,8k+3}f_{8i+8k+4}-\alpha_{2,8k+5}f_{8i+8k+7}-\beta_{2,8k+7}f_{8i+8k+8}-\beta_{2,8k+8}f_{8i+8k+9}-2\alpha_{2,8k+8}f_{8i+8k+10}).\\[1mm]
[f_{8i+4},y]=&(2i+1)f_{8i+4}+\sum\limits_{k=1}^{t}((4i+2)\alpha_{2,8k+1}+(2i+1)\beta_{2,8k+2})f_{8i+8k+4}\\[1mm]
             &\hfill+\sum\limits_{k=0}^{t}(-3\beta_{2,8k+3}f_{8i+8k+5}-3\alpha_{2,8k+4}f_{8i+8k+7}+3\beta_{2,8k+7}f_{8i+8k+9}-3\alpha_{2,8k+8}f_{8i+8k+11}).\\[1mm]
[f_{8i+5},y]=&(2i+1)f_{8i+5}+\sum\limits_{k=1}^{t}((4i+3)\alpha_{2,8k+1}+(2i+1)\beta_{2,8k+2})f_{8i+8k+5}
+\sum\limits_{k=0}^{t}(-2\beta_{2,8k+3}f_{8i+8k+6}\\[1mm]
             &\hfill-\alpha_{2,8k+3}f_{8i+8k+7}+\alpha_{2,8k+4}f_{8i+8k+8}+\alpha_{2,8k+5}f_{8i+8k+9}+\alpha_{2,8k+8}f_{8i+8k+12}).\\[1mm]
[f_{8i+6},y]=&(2i+1)f_{8i+6}+\sum\limits_{k=1}^{t}((4i+4)\alpha_{2,8k+1}+(2i+1)\beta_{2,8k+2})f_{8i+8k+6}\\[1mm]
             &\hfill+\sum\limits_{k=0}^{t}(\alpha_{2,8k+3}f_{8i+8k+8}-\alpha_{2,8k+4}f_{8i+8k+9}+\alpha_{2,8k+8}f_{8i+8k+13}).\\[1mm]
[f_{8i+7},y]=&(2i+2)f_{8i+7}+\sum\limits_{k=1}^{t}((4i+3)\alpha_{2,8k+1}+(2i+2)\beta_{2,8k+2})f_{8i+8k+7}
+\sum\limits_{k=0}^{t}(\beta_{2,8k+3}f_{8i+8k+8}\\[1mm]
             &\hfill+2\alpha_{2,8k+4}f_{8i+8k+10}-\alpha_{2,8k+5}f_{8i+8k+11}-\beta_{2,8k+7}f_{8i+8k+12}+\beta_{2,8k+8}f_{8i+8k+13}).\\[1mm]
\end{array}$$}
{\small
$$\begin{array}{lll}
[f_{8i+8},y]=&(2i+2)f_{8i+8}+\sum\limits_{k=0}^{t}((4i+4)\alpha_{2,8k+1}+(2i+2)\beta_{2,8k+2})f_{8i+8k+8}
+\sum\limits_{k=0}^{t}(-\beta_{2,8k+3}f_{8i+8k+9}\\[1mm]
             &\hfill-2\alpha_{2,8k+3}f_{8i+8k+10}-\alpha_{2,8k+4}f_{8i+8k+11}+\beta_{2,8k+7}f_{8i+8k+13}-2\beta_{2,8k+8}f_{8i+8k+14}-\alpha_{2,8k+8}f_{8i+8k+15}).\\[1mm]
\quad \ \ [x,y]=&\sum\limits_{k=0}^{t}(\alpha_{8k+1}f_{8k+1}+\alpha_{8k+2}f_{8k+2}+\alpha_{8k+3}f_{8k+3}
+\alpha_{8k+4}f_{8k+4}+\alpha_{8k+5}f_{8k+5}\\[1mm]
      &\hfill+\alpha_{8k+6}f_{8k+6}+\alpha_{8k+7}f_{8k+7}+\alpha_{8k+8}f_{8k+8})+\delta_1x+\delta_2y,\\[1mm]
\end{array}$$} \hfill where $i,t\in \mathbb{N}$.

Consider the following base change
$$\begin{array}{lll} x'&=&x+\sum\limits_{k=1}^{t}\alpha_{1,8k+1}f_{8k}+\sum\limits_{k=0}^{t}(\beta_{1,8k+3}f_{8k+1}
-\alpha_{1,8k+3}f_{8k+2}-\alpha_{1,8k+4}f_{8k+3}+\frac{1}3\alpha_{1,8k+5}f_{8k+4}\\
&& \hfill -\beta_{1,8k+7}f_{8k+5}+\beta_{1,8k+8}f_{8k+6}-\alpha_{1,8k+8}f_{8k+7}),\\
y'&=&y+\sum\limits_{k=1}^{t}\alpha_{2,8k+1}f_{8k}+\sum\limits_{k=0}^{t}(\beta_{2,8k+3}f_{8k+1}
-\alpha_{2,8k+3}f_{8k+2}-\alpha_{2,8k+4}f_{8k+3}+\frac{1}3\alpha_{2,8k+5}f_{8k+4}\\
&& \hfill -\beta_{2,8k+7}f_{8k+5}+\beta_{2,8k+8}f_{8k+6}-\alpha_{2,8k+8}f_{8k+7}).\end{array}$$

Then we get $\beta_{j,8k+3}=\alpha_{j,8k+3}=\alpha_{j,8k+4}=\alpha_{j,8k+5}=\beta_{j,8k+7}=\alpha_{j,8k+8}=\beta_{j,8k+8}=0,\ 1\leq j\leq 2$. \\
Introduce the notations
$$\beta_{1,8k}:=2\alpha_{1,8k+1}+\beta_{1,8k+2},\quad  \quad  \beta_{2,8k}:=2\alpha_{2,8k+1}+\beta_{2,8k+2},$$

Consider the identity
$$[f_1,[x,y]]=[[f_1,x],y]-[[f_1,y],x]=[f_{1},y]=0.$$
From the other hand we can compute it as follows
$$\begin{array}{lll}[f_1,[x,y]]&=&[f_1,\sum\limits_{k=0}^{t}(\alpha_{8k+1}f_{8k+1}+\alpha_{8k+2}f_{8k+2}
+\alpha_{8k+3}f_{8k+3}+\alpha_{8k+4}f_{8k+4}+\alpha_{8k+5}f_{8k+5}\\
&&\hfill+\alpha_{8k+6}f_{8k+6}+\alpha_{8k+7}f_{8k+7}+\alpha_{8k+8}f_{8k+8})+\delta_1x+\delta_2y]\\
&=&\sum\limits_{k=0}^{t}(\alpha_{8k+2}f_{8k+3}+\alpha_{8k+3}f_{8k+4}
-3\alpha_{8k+4}f_{8k+5}-2\alpha_{8k+5}f_{8k+6}+\alpha_{8k+7}f_{8k+8}-\alpha_{8k+8}f_{8k+9})+\delta_1f_1.\end{array}$$
Thus we get
$$\alpha_{8k+2}=\alpha_{8k+3}=\alpha_{8k+4}=\alpha_{8k+5}=\alpha_{8k+7}=\alpha_{8k+8}=\delta_1=0.$$

Let us now consider the identity
$$\begin{array}{lll} [f_2,[x,y]]&=&[[f_2,x],y]-[[f_2,y],x]\\
&=&\left[\sum\limits_{i=1}^{t}\beta_{1,8i}f_{8i+2},y]-[f_{2}+\sum\limits_{i=1}^{t}\beta_{2,8i}f_{8i+2},x\right]\\
&=&\sum\limits_{i=1}^{t}\beta_{1,8i}\Big((2i+1)f_{8i+2}+\sum\limits_{k=1}^{t}(2i+1)\beta_{2,8k}f_{8i+8k+2}\Big)\\
&&\hfill -\sum\limits_{i=1}^{t}\beta_{1,8i}f_{8i+2}-\sum\limits_{i=1}^{t}\beta_{2,8i}\Big(4if_{8i+2}
+\sum\limits_{k=1}^{t}(2i+1)\beta_{1,8k}f_{8i+8k+2}\Big)\\
&=&\sum\limits_{i=1}^{t}2i(\beta_{1,8i}-2\beta_{2,8i})f_{8i+2}
+\sum\limits_{i=1}^{t}\Big(\sum\limits_{k=1}^{t}(2i+1)(\beta_{1,8i}\beta_{2,8k}
-\beta_{1,8k}\beta_{2,8i})f_{8i+8k+2}\Big).\end{array}$$
We can compute $[f_2,[x,y]]$ differently as follows
$$\begin{array}{lll}[f_2,[x,y]]&=&[f_2,\sum\limits_{k=0}^{t}(\alpha_{8k+1}f_{8k+1}+\alpha_{8k+6}f_{8k+6})+\delta_2y]\\
&=&\sum\limits_{k=0}^{t}(-\alpha_{8k+1}f_{8k+3}-\alpha_{8k+6}f_{8k+8})+\delta_2(f_{2}
+\sum\limits_{i=1}^{t}\beta_{2,8i+2}f_{8i+2}).\end{array}$$
Combining we obtain
$$\beta_{1,8k}=2\beta_{2,8k},\quad \alpha_{8k+1}=\alpha_{8k+6}=\delta_2=0.$$

Now we apply the base change $x'=x-2y$ to get the algebra $R_{\mathbf{n}_2}(\beta)$.
\end{proof}

\subsection{Low-dimensional (co)homology groups of $R_{\mathbf{n}_1}(0)$ and $R_{\mathbf{n}_2}(0)$}

\

\

In this section we compute low-dimensoinal cohomology groups of the algebras $R_{\mathbf{n}_1}(0)$ and $R_{\mathbf{n}_2}(0)$. It seemed that the cohomology groups ${\rm H}^1(L,L)$ and ${\rm H}^2(L,L)$, where $L=R_{\mathbf{n}_1}(0), R_{\mathbf{n}_2}(0)$, are trivial and the algebras are complete. To do this we first describe the derivations of $R_{\mathbf{n}_1}(0)$ and $R_{\mathbf{n}_2}(0)$.

\begin{prop}\label{prop2}
The derivations of the algebra $R_{\mathbf{n}_1}(0)$ on the basis vectors $\{x, y, e_1, e_2,... \}$ are given as follows
$$\begin{array}{lll} d(e_{3i-2})&=&(i\alpha_{1}+(i-1)\beta_2)e_{3i-2}+\sum\limits_{k=2}^{t}\alpha_{3k-2}e_{3k+3i-5}
+\sum\limits_{k=1}^{t}\alpha_{3k}e_{3k+3i-3},\\
d(e_{3i-1})&=&((i-1)\alpha_1+i\beta_{2})e_{3i-1}-\sum\limits_{k=2}^{t}\alpha_{3k-2}e_{3k+3i-4}
+\sum\limits_{k=1}^{t}\beta_{3k}e_{3k+3i-3},\\
d(e_{3i})&=&i(\beta_{2}+ \alpha_{1})e_{3i}-\sum\limits_{k=1}^{t}(\beta_{3k}e_{3k+3i-2}+\alpha_{3k}e_{3k+3i-1}),\\
d(x)&=&-\sum\limits_{k=1}^{t}k\beta_{3k}e_{3k-2}+\sum\limits_{k=2}^{t}(k-1)\alpha_{3k}e_{3k-1}
+\sum\limits_{k=2}^{t}(1-k)\alpha_{3k-2}e_{3k-3}\\
d(y)&=&\sum\limits_{k=1}^{t}\beta_{3k}e_{3k-2}+\sum\limits_{k=1}^{t}\alpha_{3k}e_{3k-1}.\\
&&\hfill where \ i,t \in \mathbb{N}.\\
\end{array}$$
\end{prop}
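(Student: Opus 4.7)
The strategy is to reduce the problem to Proposition~\ref{prop1} and then impose the constraints coming from the mixed brackets in $M:=R_{\mathbf{n}_1}(0)$. Since $\mathbf{n}_1$ is the maximal pro-nilpotent ideal of $M$, it is characteristic, so $d(\mathbf{n}_1)\subseteq \mathbf{n}_1$ for every $d\in\mathrm{Der}(M)$. Hence $d|_{\mathbf{n}_1}$ is a derivation of $\mathbf{n}_1$, and by Proposition~\ref{prop1} it is determined by finitely-supported scalar sequences $\{\alpha_j\}$ and $\{\beta_j\}$ via the formulas (\ref{n1}). It remains to write $d(x)=ax+by+\sum_{j}c_{j}e_{j}$ and $d(y)=a'x+b'y+\sum_{j}c'_{j}e_{j}$ and to determine which parameters can appear freely and how they are related.

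The heart of the argument is to apply the derivation identity to each bracket of the form $[e_i,x]$ and $[e_i,y]$. Since $\mathrm{ad}_x$ acts diagonally on the $\mathbf{n}_1$-basis (with eigenvalue $i$ on $e_{3i-2}$, $i-1$ on $e_{3i-1}$, and $i$ on $e_{3i}$), the relation $d([e_{3i-2},x])=i\,d(e_{3i-2})$ expands to
\[
[d(e_{3i-2}),x]+[e_{3i-2},d(x)]=i\,d(e_{3i-2}).
\]
Using Proposition~\ref{prop1} on the left-hand side and comparing coefficients term by term forces the linear-in-$i$ expressions that appear in (\ref{n1}) to collapse to $i$-independent constants. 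A short computation shows in particular that $\beta_{3k-1}+\alpha_{3k-2}=0$ for $k\geq 2$, which reduces the coefficient $((i-1)\beta_{3k-1}+i\alpha_{3k-2})$ to the constant $\alpha_{3k-2}$ appearing in the statement. The analogous analyses using $[e_{3i-1},x]$, $[e_{3i},x]$ and the three $y$-brackets pin down the remaining relations and, in the same pass, read off the coefficients $c_{j}$, $c'_{j}$ from the "unmatched" pieces, while forcing $a=b=a'=b'=0$ (no $x$- or $y$-component can appear on the right-hand side of any of these bracket identities).

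The last step is to impose $[x,y]=0$, which yields $[d(x),y]+[x,d(y)]=0$; this acts as a consistency check and supplies the final linear constraints, matching exactly the stated formulas for $d(x)$ and $d(y)$.

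The main obstacle I anticipate is purely combinatorial bookkeeping: the three residue classes modulo $3$ interact with the structure constants $c_{i,j}$ of $\mathbf{n}_1$, so the comparison at each step splits into subcases, and one must carefully track the shifted indices $3k+3i-5$, $3k+3i-4$, $3k+3i-3$ as $i,k$ vary. No step is conceptually difficult, but one must recognize at each stage that a coefficient of the form $\lambda(i-1)+\mu i$, which must equal a quantity independent of $i$, can only do so if $\lambda+\mu=0$, which is exactly what collapses the richer form of Proposition~\ref{prop1} to the sparser form of Proposition~\ref{prop2}.
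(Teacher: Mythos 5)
Your proposal is correct and follows essentially the same route as the paper: both first establish $d(\mathbf{n}_1)\subseteq\mathbf{n}_1$ (the paper via $[\mathbf{n}_1,Q_1]=\mathbf{n}_1$ rather than a characteristic-ideal argument), invoke Proposition~\ref{prop1} for $d|_{\mathbf{n}_1}$, and then impose the derivation identity on the brackets with $x$ and $y$ to force $\beta_{3k-1}+\alpha_{3k-2}=0$ for $k\ge 2$, kill the $x$- and $y$-components of $d(x),d(y)$, and read off the remaining coefficients. The only cosmetic difference is that the paper extracts the constraints from the generators $e_1,e_2$ alone, whereas you run the identity for general $e_{3i-2},e_{3i-1},e_{3i}$; the $i$-independence argument you describe is exactly the collapse the paper obtains.
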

\begin{proof} Due to $[\mathbf{n}_1,Q_1]=\mathbf{n}_1$ any derivation $d \in DerR_{\mathbf{n}_1}(0)$ is a derivation of $\mathbf{n}_1$
$$d(\mathbf{n}_1)=d([\mathbf{n}_1,Q_1])=[d(\mathbf{n}_1),Q_1]+[\mathbf{n}_1,d(Q_1)]\subset \mathbf{n}_1.$$
We use Proposition \ref{prop1} to get

$$\begin{array}{lll}d(e_{3i-2})&=&\sum\limits_{k=1}^{t}(((i-1)\beta_{3k-1}+i\alpha_{3k-2})e_{3k+3i-5}
+\alpha_{3k}e_{3k+3i-3}),\\
d(e_{3i-1})&=&\sum\limits_{k=1}^{t}(i\beta_{3k-1}+ (i-1)\alpha_{3k-2})e_{3k+3i-4}+\beta_{3k}e_{3k+3i-3}),\\
d(e_{3i})&=&\sum\limits_{k=1}^{t}(i(\beta_{3k-1}+ \alpha_{3k-2})e_{3k+3i-3}-\beta_{3k}e_{3k+3i-2}-\alpha_{3k}e_{3k+3i-1}),\end{array}$$ \hfill \emph{where} $i \in \mathbb{N}$

and
let
$$d(x)=\sum\limits_{k=1}^{t}\gamma_{3k-2}e_{3k-2}+\sum\limits_{k=1}^{t}\gamma_{3k-1}e_{3k-1}+\sum\limits_{k=1}^{t}\gamma_{3k}e_{3k}+b_{1,1}x+b_{1,2}y,$$
$$d(y)=\sum\limits_{k=1}^{t}\delta_{3k-2}e_{3k-2}+\sum\limits_{k=1}^{t}\delta_{3k-1}e_{3k-1}
+\sum\limits_{k=1}^{t}\delta_{3k}e_{3k}+b_{2,1}x+b_{2,2}y.$$

Applying the derivation $d$ to $[e_1,y]$ we get
$$\begin{array}{lll}d([e_1,y])&=&[d(e_1),y]+[e_1,d(y)]\\
&=&\left[\sum\limits_{k=1}^{t}(\alpha_{3k-2}e_{3k-2}+\alpha_{3k}e_{3k}),y\right]
+\left[e_1,\sum\limits_{k=1}^{t}\delta_{3k-2}e_{3k-2}+\sum\limits_{k=1}^{t}\delta_{3k-1}e_{3k-1}\right.\\
&&\hfill \left.+\sum\limits_{k=1}^{t}\delta_{3k}e_{3k}+b_{2,1}x+b_{2,2}y\right]\\
&=&-\sum\limits_{k=1}^{t}\alpha_{3k-2}e_{3k-2}-\sum\limits_{k=1}^{t}\delta_{3k-1}e_{3k}
+\sum\limits_{k=1}^{t}\delta_{3k}e_{3k+1}+b_{2,1}e_1-b_{2,2}e_1.\end{array}$$
On the other hand
$$d([e_1,y])=d(-e_1)=-\sum\limits_{k=1}^{t}(\alpha_{3k-2}e_{3k-2}+\alpha_{3k}e_{3k}).$$
Therefore we obtain
$$b_{2,1}=b_{2,2},\quad  \delta_{3k-1}=\alpha_{3k},\quad \delta_{3k}=0,\quad 1\leq k\leq t.$$
Let us apply $d$ to $[e_2,y]$ then one has
$$\begin{array}{lll}d([e_2,y])&=&[d(e_2),y]+[e_2,d(y)]\\
&=&\left[\sum\limits_{k=1}^{t}(\beta_{3k-1}e_{3k-1}+\beta_{3k}e_{3k}),y\right]
+\left[e_2,\sum\limits_{k=1}^{t}\delta_{3k-2}e_{3k-2}+\sum\limits_{k=1}^{t}\alpha_{3k}e_{3k-1}+b_{2,2}x+b_{2,2}y\right]\\
&=&\sum\limits_{k=1}^{t}\beta_{3k-1}e_{3k-1}+\sum\limits_{k=1}^{t}\delta_{3k-2}e_{3k}+b_{2,2}e_2\end{array}$$
and computing it another way\\
$$d([e_2,y])=d(e_2)=\sum\limits_{k=1}^{t}(\beta_{3k-1}e_{3k-1}+\beta_{3k}e_{3k})$$
we get
$$b_{2,2}=0,\quad  \delta_{3k-2}=\beta_{3k},\quad 1\leq k\leq t.$$
Therefore,
$$d(y)=\sum\limits_{k=1}^{t}\beta_{3k}e_{3k-2}+\sum\limits_{k=1}^{t}\alpha_{3k}e_{3k-1},\quad  i\in N,$$
Now we consider
$$\begin{array}{lll}d([e_1,x])&=&[d(e_1),x]+[e_1,d(x)]\\
&=&\left[\sum\limits_{k=1}^{t}(\alpha_{3k-2}e_{3k-2}+\alpha_{3k}e_{3k}),x\right]\\
&&\hfill +[e_1,\sum\limits_{k=1}^{t}\gamma_{3k-2}e_{3k-2}+\sum\limits_{k=1}^{t}\gamma_{3k-1}e_{3k-1}
+\sum\limits_{k=1}^{t}\gamma_{3k}e_{3k}+b_{1,1}x+b_{1,2}y]\\
&=&\sum\limits_{k=1}^{t}(k\alpha_{3k-2}e_{3k-2}+k\alpha_{3k}e_{3k})-\sum\limits_{k=1}^{t}\gamma_{3k-1}e_{3k}
+\sum\limits_{k=1}^{t}\gamma_{3k}e_{3k+1}+(b_{1,1}-b_{1,2})e_1\end{array}$$
and we also compute it another way
$$d([e_1,x])=d(e_1)=\sum\limits_{k=1}^{t}(\alpha_{3k-2}e_{3k-2}+\alpha_{3k}e_{3k}).$$
to get the equalities
$$b_{1,2}=b_{1,1},\quad \gamma_{3t}=\gamma_2=0,\quad \gamma_{3k-3}=(1-k)\alpha_{3k-2},\quad \gamma_{3k-1}=(k-1)\alpha_{3k},\quad 2\leq
k\leq t.$$
If we consider
$$\begin{array}{lll}0&=&d([e_2,x])=[d(e_2),x]+[e_2,d(x)]\\
&=&\left[\sum\limits_{k=1}^{t}(\beta_{3k-1}e_{3k-1}+\beta_{3k}e_{3k}),x\right]\\
&&\hfill+[e_2,\sum\limits_{k=1}^{t}\gamma_{3k-2}e_{3k-2}+\sum\limits_{k=2}^{t}(k-1)\alpha_{3k}e_{3k-1}
+\sum\limits_{k=2}^{t}(1-k)\alpha_{3k-2}e_{3k-3}+b_{1,1}x+b_{1,1}y]\\
&=&\sum\limits_{k=1}^{t}((k-1)\beta_{3k-1}e_{3k-1}+k\beta_{3k}e_{3k})+\sum\limits_{k=1}^{t}\gamma_{3k-2}e_{3k}
-\sum\limits_{k=2}^{t}(1-k)\alpha_{3k-2}e_{3k-1}+b_{1,1}e_2\end{array}$$
we obtain
$$b_{1,1}=0,\quad \gamma_{3k-2}=-k\beta_{3k},\ \ 1\leq k\leq t, \quad \beta_{3k-1}=-\alpha_{3k-2},\quad 2\leq k\leq t.$$
\end{proof}

The counterpart of the result above for $R_{\mathbf{n}_2}(0)$ also is true and can be proved similarly to that of Proposition \ref{prop2}. Below we give the result without proof.

\begin{prop}\label{1prop2}
The derivatives of $R_{\mathbf{n}_2}(0)$ are given as follows
$$\begin{array}{lll}
d(f_{8i+1})=&((4i+1)\alpha_{1}+2i\beta_{2})f_{8i+1}+\sum\limits_{k=1}^{t}\alpha_{8k+1}f_{8i+8k+1}
+\sum\limits_{k=0}^{t}(\alpha_{8k+3}f_{8i+8k+3}\\[2mm]
            &\hfill+\alpha_{8k+4}f_{8i+8k+4}+\alpha_{8k+5}f_{8i+8k+5}-2\beta_{8k+7}f_{8i+8k+6}+\alpha_{8k+8}f_{8i+8k+8}),\\[2mm]
d(f_{8i+2})=&(4i\alpha_{1}+(2i+1)\beta_{2})f_{8i+2}-\sum\limits_{k=1}^{t}2\alpha_{8k+1}f_{8i+8k+2}\\[2mm]
            &\hfill+\sum\limits_{k=0}^{t}(\beta_{8k+3}f_{8i+8k+3}+\beta_{8k+7}f_{8i+8k+7}+\beta_{8k+8}f_{8i+8k+8}),\\[2mm]
d(f_{8i+3})=&((4i+1)\alpha_{1}+(2i+1)\beta_2)f_{8i+3}-\sum\limits_{k=1}^{t}\alpha_{8k+1}f_{8i+8k+3}
+\sum\limits_{k=0}^{t}(\beta_{8k+3}f_{8i+8k+4}-\alpha_{8k+5}f_{8i+8k+7}\\[2mm]
            &\hfill-\beta_{8k+7}f_{8i+8k+8}-\beta_{8k+8}f_{8i+8k+9}-2\alpha_{8k+8}f_{8i+8k+10})\\[2mm]
d(f_{8i+4})=&((4i+2)\alpha_{1}+(2i+1)\beta_{2})f_{8i+4}+\sum\limits_{k=0}^{t}(-3\beta_{8k+3}f_{8i+8k+5}
-3\alpha_{8k+4}f_{8i+8k+7}\\[2mm]
            &\hfill+3\beta_{8k+7}f_{8i+8k+9}-3\alpha_{8k+8}f_{8i+8k+11}),\\[2mm]
d(f_{8i+5})=&((4i+3)\alpha_{1}+(2i+1)\beta_{2})f_{8i+5}+\sum\limits_{k=1}^{t}\alpha_{8k+1}f_{8i+8k+5}
+\sum\limits_{k=0}^{t}(-2\beta_{8k+3}f_{8i+8k+6}-\alpha_{8k+3}f_{8i+8k+7}\\[2mm]
            &\hfill+\alpha_{8k+4}f_{8i+8k+8}+\alpha_{8k+5}f_{8i+8k+9}+\alpha_{8k+8}f_{8i+8k+12}),\\[2mm]
d(f_{8i+6})=&((4i+4)\alpha_{1}+(2i+1)\beta_{2})f_{8i+6}+\sum\limits_{k=1}^{t}2\alpha_{8k+1}f_{8i+8k+6}\\[2mm]
            &\hfill+\sum\limits_{k=0}^{t}(\alpha_{8k+3}f_{8i+8k+8}-\alpha_{8k+4}f_{8i+8k+9}+\alpha_{8k+8}f_{8i+8k+13}),\\[2mm]
d(f_{8i+7})=&((4i+3)\alpha_{1}+(2i+2)\beta_{2})f_{8i+7}-\sum\limits_{k=1}^{t}\alpha_{8k+1}f_{8i+8k+7}
+\sum\limits_{k=0}^{t}(\beta_{8k+3}f_{8i+8k+8}\\[2mm]
            &\hfill+2\alpha_{8k+4}f_{8i+8k+10}-\alpha_{8k+5}f_{8i+8k+11}-\beta_{8k+7}f_{8i+8k+12}
            +\beta_{8k+8}f_{8i+8k+13}),\\[2mm]
d(f_{8i+8})=&((4i+4)\alpha_{1}+(2i+2)\beta_{2})f_{8i+8}+\sum\limits_{k=0}^{t}(-\beta_{8k+3}f_{8i+8k+9}
-2\alpha_{8k+3}f_{8i+8k+10}-\alpha_{8k+4}f_{8i+8k+11}\\[2mm]
            &\hfill+\beta_{8k+7}f_{8i+8k+13}-2\beta_{8k+8}f_{8i+8k+14}-\alpha_{8k+8}f_{8i+8k+15}),\\[2mm]
\quad \ \ d(x)=&\sum\limits_{k=0}^{t}(\beta_{8k+3}f_{8k+1}+2\alpha_{8k+3}f_{8k+2}+\alpha_{8k+4}f_{8k+3}-\beta_{8k+7}f_{8k+5}+2\beta_{8k+8}f_{8k+6}+\alpha_{8k+8}f_{8k+7}),\\[2mm]
\quad \ \ d(y)=&\sum\limits_{k=0}^{t}(2k\beta_{8k+3}f_{8k+1}-(2k+1)\alpha_{8k+3}f_{8k+2}-(2k+1)\alpha_{8k+4}f_{8k+3}
+\frac{1}{3}(2k+1)\alpha_{8k+5}f_{8k+4}\\[2mm]
     &\hfill-(2k+1)\beta_{8k+7}f_{8k+5}+(2k+1)\beta_{8k+6}f_{8k+6}-(2k+2)\alpha_{8k+8}f_{8k+7})+\sum\limits_{k=0}^{t-1}2(k+1)\alpha_{8k+9}f_{8k+8}.\\[2mm]
\end{array}$$
\hfill where $i,t\in \mathbb{N}.$
\end{prop}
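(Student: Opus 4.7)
The plan is to mirror the strategy used for Proposition \ref{prop2}. First, observe that from the multiplication table of $R_{\mathbf{n}_2}(\beta)$ at $\beta=0$ one reads $[\mathbf{n}_2,Q_2]=\mathbf{n}_2$ (for instance, $[f_{8i+1},x]=f_{8i+1}$, $[f_{8i+2},x]=-2f_{8i+2}$, and so on span all of $\mathbf{n}_2$). Consequently, for any $d\in \mathrm{Der}(R_{\mathbf{n}_2}(0))$,
\[
d(\mathbf{n}_2)=d([\mathbf{n}_2,Q_2])=[d(\mathbf{n}_2),Q_2]+[\mathbf{n}_2,d(Q_2)]\subset \mathbf{n}_2,
\]
so $d|_{\mathbf{n}_2}$ is a derivation of $\mathbf{n}_2$. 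Proposition \ref{prop3} then supplies the eight displayed formulas for $d(f_{8i+r})$, $1\le r\le 8$, in terms of the scalars $\alpha_{8k+1},\alpha_{8k+3},\alpha_{8k+4},\alpha_{8k+5},\alpha_{8k+8}$ and $\beta_{8k+2},\beta_{8k+3},\beta_{8k+7},\beta_{8k+8}$.

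Next I would posit generic expansions
\[
d(x)=\sum_{k}c_{k}f_{k}+c'x+c''y,\qquad d(y)=\sum_{k}e_{k}f_{k}+e'x+e''y,
\]
with finitely many unknowns, and exploit the Leibniz rule applied to the defining relations of $R_{\mathbf{n}_2}(0)$. Concretely, for each residue $r\in\{1,2,\dots,8\}$ one has two identities
\[
d([f_{r},x])=[d(f_{r}),x]+[f_{r},d(x)],\qquad d([f_{r},y])=[d(f_{r}),y]+[f_{r},d(y)].
\]
The left-hand sides are known scalar multiples of $d(f_{r})$ (or zero when the bracket vanishes), computed directly from the table of $R_{\mathbf{n}_2}(0)$; the first summands on the right are obtained from Proposition \ref{prop3} together with the $x,y$-action on $\mathbf{n}_2$; the second summands use only the structure of $\mathbf{n}_2$. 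Matching coefficients of each basis vector $f_{8i+8k+s}$ produces enough linear relations to solve for all the $c_{k},e_{k}$ and to force $c'=c''=e'=e''=0$, exactly as $b_{1,1},b_{1,2},b_{2,1},b_{2,2}$ were eliminated in the $\mathbf{n}_1$ case. Along the way one recovers the constraints that kill the ``unused'' parameters $\beta_{8k+2}$ etc.\ except for the two diagonal weights $\alpha_1,\beta_2$.

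The main obstacle is purely combinatorial bookkeeping. Because the defining table of $\mathbf{n}_2$ has period $8$ (versus period $3$ for $\mathbf{n}_1$), each Leibniz identity expands into a considerably larger number of nonzero terms spread across several congruence classes modulo $8$. The cleanest way to manage this is to process the identities in the order $r=1,2,\dots,8$, at each step substituting the vanishings and expressions already obtained so that only one fresh family of coefficients is active per step; the $r=1$ and $r=2$ relations pin down $d(y)$ on the ``low'' generators and fix several $\delta$-analogues, the $r=3,4,5$ relations propagate these through the $x$-action, and the final cases $r=6,7,8$ close the system. After both passes (for $x$ and for $y$) are complete, the only surviving free parameters are those listed in the statement, and the resulting formulas for $d(f_{8i+r})$, $d(x)$ and $d(y)$ agree term-by-term with the displayed expressions, completing the proof.
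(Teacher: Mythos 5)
Your proposal follows exactly the strategy the paper intends: the paper states this proposition without proof, remarking only that it can be proved similarly to Proposition \ref{prop2}, and your outline --- showing $d(\mathbf{n}_2)\subset\mathbf{n}_2$ via $[\mathbf{n}_2,Q_2]=\mathbf{n}_2$, invoking Proposition \ref{prop3} for $d|_{\mathbf{n}_2}$, and then pinning down $d(x)$, $d(y)$ and the residual constraints (such as $\beta_{8k+2}=-2\alpha_{8k+1}$ for $k\geq 1$) through the Leibniz rule applied to $[f_r,x]$ and $[f_r,y]$ --- is precisely that argument transported to the period-$8$ setting. The approach is correct; only the lengthy coefficient bookkeeping remains to be written out.
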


Now we show that the algebra $R_{\mathbf{n}_1}(0)$ is complete.
\begin{thm}\label{thm2}
The potential solvable Lie algebra $R_{\mathbf{n}_1}(0)$ is complete.
\end{thm}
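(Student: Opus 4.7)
The plan is to prove completeness directly by verifying the two defining properties: the centre of $L := R_{\mathbf{n}_1}(0)$ is trivial, and every derivation described in Proposition~\ref{prop2} is inner.

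For the centre, I would take a general element $z = ax + by + \sum_{i \ge 1} c_i e_i$ and impose $[z,x]=[z,y]=0$. Using the multiplications $[e_{3j-2},x]=j\,e_{3j-2}$, $[e_{3j-1},x]=(j-1)e_{3j-1}$, $[e_{3j},x]=j\,e_{3j}$, the relation $[z,x]=0$ kills every $c_i$ with $i \neq 2$, and then $[z,y]=0$ kills $c_2$. At this point $[z,e_1]=(b-a)e_1$ and $[z,e_2]=-b\,e_2$, which forces $a=b=0$; by the Jacobi identity this suffices for $z=0$.

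For the inner-ness of $d$, the strategy is to write down an explicit $z \in L$ matching the parameters $\alpha_1,\beta_2,\{\alpha_{3k-2}\}_{k\ge 2},\{\alpha_{3k}\}_{k\ge 1},\{\beta_{3k}\}_{k\ge 1}$ of Proposition~\ref{prop2}. First I would match the ``diagonal'' scalars: the coefficients $i\alpha_1+(i-1)\beta_2$, $(i-1)\alpha_1+i\beta_2$ and $i(\alpha_1+\beta_2)$ in $d(e_{3i-2}), d(e_{3i-1}), d(e_{3i})$ are precisely the coefficients produced by $\mathrm{ad}_{ax+by}$ once one sets $a=-(\alpha_1+\beta_2)$ and $b=-\beta_2$. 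Next, to absorb the remaining off-diagonal terms, I would read the coefficients $c_i$ off the formulae for $d(x)$ and $d(y)$, using that $\mathrm{ad}_{\sum c_i e_i}(x)=\sum c_i[e_i,x]$ and the analogous expression for $y$ decouple cleanly by residue class mod $3$. This procedure forces
\[
z \;=\; -(\alpha_1+\beta_2)\,x \;-\; \beta_2\,y \;+\; \sum_{k\ge 1}\bigl(-\beta_{3k}\,e_{3k-2} \;+\; \alpha_{3k}\,e_{3k-1} \;-\; \alpha_{3(k+1)-2}\,e_{3k}\bigr).
\]

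It then remains to verify $\mathrm{ad}_z = d$ on the generating set $\{e_1,e_2,x,y\}$, which is sufficient since a derivation is determined by its values on generators. Agreement on $x$ and $y$ is built into the definition of the $c_i$. For $e_1$ and $e_2$ one expands $[z,e_1]$ and $[z,e_2]$ using the multiplication rule $[e_i,e_j]=c_{i,j}e_{i+j}$ of $\mathbf{n}_1$ and sorts the resulting contributions by residue class mod $3$. The hard part will be exactly this index bookkeeping across the three arithmetic progressions, and in particular verifying that the $c_i$'s forced by $d(x)$ and by $d(y)$ are mutually consistent; the consistency is ultimately the shadow of the identity $\beta_{3k-1}=-\alpha_{3k-2}$ obtained during the proof of Proposition~\ref{prop2}, so no new constraint on the parameters of $d$ appears, and we conclude $d=\mathrm{ad}_z$.
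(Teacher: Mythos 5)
Your proposal is correct and follows essentially the same route as the paper: check that the centre is trivial and then exhibit an explicit element whose adjoint realizes the derivation of Proposition~\ref{prop2}; your $z$ is exactly the negative of the paper's element $a$ (the sign difference only reflects using $\mathrm{ad}_z(u)=[z,u]$ versus the paper's verification of $d(u)=[u,a]$). The only cosmetic difference is that you verify agreement on the generators $\{e_1,e_2,x,y\}$ and invoke that a derivation vanishing on generators vanishes, whereas the paper checks all basis vectors directly.
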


\begin{proof}
It is easy to see from the product rules of $R_{\mathbf{n}_1}(0)$ that $Center (R_{\mathbf{n}_1}(0))={0}.$
Now, we shall show that all derivations of $R_{\mathbf{n}_1}(0)$ are inner.

Let $d$ be a derivation of $R_{\mathbf{n}_1}(0)$. We show that there exists $a \in R_{\mathbf{n}_1}(0)$ such that $d\equiv ad_a.$ It is sufficient to verify this on the basis vectors $\{x, y, e_1, e_2,... \}$. We claim that
$$a=(\alpha_1+\beta_2)x+\beta_2y+\sum\limits_{k=1}^{t}\beta_{3k}e_{3k-2}
-\sum\limits_{k=1}^{t}\alpha_{3k}e_{3k-1}+\sum\limits_{k=2}^{t}\alpha_{3k-2}e_{3k-3}$$
is desired such an element of $R_{\mathbf{n}_1}(0)$ for the derivations from Proposition
\ref{prop2}. Indeed,

$$\begin{array}{lll}d(e_{3i-2})-ad_a(e_{3i-2})&=&(i\alpha_{1}+(i-1)\beta_2)e_{3i-2}
+\sum\limits_{k=2}^{t}\alpha_{3k-2}e_{3k+3i-5}+\sum\limits_{k=1}^{t}\alpha_{3k}e_{3k+3i-3}\\
&&\hfill-[e_{3i-2},(\alpha_1+\beta_2)x+\beta_2y+\sum\limits_{k=1}^{t}\beta_{3k}e_{3k-2}
-\sum\limits_{k=1}^{t}\alpha_{3k}e_{3k-1}+\sum\limits_{k=2}^{t}\alpha_{3k-2}e_{3k-3}]=0,\end{array}$$

$$\begin{array}{lll}d(e_{3i-1})-ad_a(e_{3i-1})
&=&((i-1)\alpha_1+i\beta_{2})e_{3i-1}-\sum\limits_{k=2}^{t}\alpha_{3k-2}e_{3k+3i-4}
+\sum\limits_{k=1}^{t}\beta_{3k}e_{3k+3i-3}\\
&&\hfill-[e_{3i-1},(\alpha_1+\beta_2)x+\beta_2y+\sum\limits_{k=1}^{t}\beta_{3k}e_{3k-2}
-\sum\limits_{k=1}^{t}\alpha_{3k}e_{3k-1}+\sum\limits_{k=2}^{t}\alpha_{3k-2}e_{3k-3}]=0,\\
%
d(e_{3i})-ad_a(e_{3i})
&=&i(\beta_{2}+ \alpha_{1})e_{3i}-\sum\limits_{k=1}^{t}(\beta_{3k}e_{3k+3i-2}+\alpha_{3k}e_{3k+3i-1})\\
&&\hfill-[e_{3i},(\alpha_1+\beta_2)x+\beta_2y+\sum\limits_{k=1}^{t}\beta_{3k}e_{3k-2}
-\sum\limits_{k=1}^{t}\alpha_{3k}e_{3k-1}+\sum\limits_{k=2}^{t}\alpha_{3k-2}e_{3k-3}]=0,\\
%
d(x)-ad_a(x)&=&-\sum\limits_{k=1}^{t}k\beta_{3k}e_{3k-2}
+\sum\limits_{k=2}^{t}(k-1)\alpha_{3k}e_{3k-1}+\sum\limits_{k=2}^{t}(1-k)\alpha_{3k-2}e_{3k-3}\\
&&\hfill-[x,(\alpha_1+\beta_2)x+\beta_2y+\sum\limits_{k=1}^{t}\beta_{3k}e_{3k-2}
-\sum\limits_{k=1}^{t}\alpha_{3k}e_{3k-1}+\sum\limits_{k=2}^{t}\alpha_{3k-2}e_{3k-3}]=0,\\
%
d(y)-ad_a(y)&=&\sum\limits_{k=1}^{t}\beta_{3k}e_{3k-2}+\sum\limits_{k=1}^{t}\alpha_{3k}e_{3k-1}\\
&&\hfill-[y,(\alpha_1+\beta_2)x+\beta_2y+\sum\limits_{k=1}^{t}\beta_{3k}e_{3k-2}
-\sum\limits_{k=1}^{t}\alpha_{3k}e_{3k-1}+\sum\limits_{k=2}^{t}\alpha_{3k-2}e_{3k-3}]=0.\end{array}$$
%
\end{proof}

\begin{thm}\label{thm5}
The potential solvable Lie algebra $R_{\mathbf{n}_2}(0)$ is complete.
\end{thm}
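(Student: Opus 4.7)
The plan is to imitate the strategy used for $R_{\mathbf{n}_1}(0)$ in Theorem \ref{thm2}: first verify that the center is trivial, then exhibit for every derivation $d$ an explicit element $a\in R_{\mathbf{n}_2}(0)$ with $d=ad_a$, which gives $\mathrm{H}^1(R_{\mathbf{n}_2}(0),R_{\mathbf{n}_2}(0))=0$ and hence completeness.

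For the centerlessness, I would inspect the bracket tables of $\mathbf{n}_2$ and the $x,y$-actions in $R_{\mathbf{n}_2}(0)$. Any central element $z=\lambda x+\mu y+\sum_i c_if_i$ must commute with $x$ and with $y$; the eigenvalues $1,-2,-1,1,2,-1$ of $ad_x$ on the generators $f_{8i+1},\dots,f_{8i+7}$ (together with $(2i+1)$- and $(2i+2)$-eigenvalues of $ad_y$) are all non-zero and pairwise distinct enough to force every $c_i=0$; then bracketing with any $f_j$ whose action on $\mathbf{n}_2$ is non-trivial forces $\lambda=\mu=0$. This step is routine.

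For the inner derivation part, I take a general derivation $d$ with parameters $\alpha_1,\beta_2,\{\alpha_{8k+j}\},\{\beta_{8k+j}\}$ as given in Proposition \ref{1prop2}. Modelled on the ansatz for $\mathbf{n}_1$ (where $a=(\alpha_1+\beta_2)x+\beta_2 y+\sum\beta_{3k}e_{3k-2}-\sum\alpha_{3k}e_{3k-1}+\sum\alpha_{3k-2}e_{3k-3}$), I expect the candidate here to have the shape
$$a=\lambda x+\mu y+\sum_{k=0}^{t}\bigl(c^{(1)}_{k}f_{8k+1}+c^{(2)}_{k}f_{8k+2}+\cdots+c^{(8)}_{k}f_{8k+8}\bigr),$$
where $\lambda,\mu$ are linear combinations of $\alpha_1,\beta_2$ designed to reproduce the diagonal coefficients $(4i+j)\alpha_1+(2i+\varepsilon)\beta_2$ appearing in the formulas of Proposition \ref{1prop2}, and each $c^{(j)}_{k}$ is (up to sign) a specific one of the $\alpha_{8k+j}$ or $\beta_{8k+j}$ chosen so that $[f_{8i+j},a]$ produces the right off-diagonal $f_{8i+8k+\ell}$-terms. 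I would read off these signs from the $d(x)$- and $d(y)$-formulas in Proposition \ref{1prop2}, since those tell me exactly which $f_{8k+j}$-coefficients of $a$ are forced by the requirement $ad_a(x)=d(x)$, $ad_a(y)=d(y)$.

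Having written down the candidate $a$, the verification $d(v)=[v,a]$ reduces to eight bracket computations for $v=f_{8i+j}$, $j=1,\dots,8$, plus the two cases $v=x,y$. The main (and only real) obstacle is bookkeeping: one has to match, term by term, the off-diagonal coefficients produced by $[f_{8i+j},f_{8k+\ell}]$ in the $8\times 8$ multiplication table of $\mathbf{n}_2$ against the eight formulas of Proposition \ref{1prop2}; the non-trivial signs and the factor $1/3$ appearing in $d(y)$ make it easy to slip. Once the element $a$ is correctly fixed by matching $d(x)$ and $d(y)$ (where it is essentially rigid, exactly as in the $R_{\mathbf{n}_1}(0)$ computation), the identities $d(f_{8i+j})=[f_{8i+j},a]$ follow from the derivation property of $ad_a$ and the fact that $\mathbf{n}_2$ is generated by $f_1,f_2$, so really only the cases $j=1,2$ need to be checked directly. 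This is how I would compress the proof, rather than grinding through all eight rows.
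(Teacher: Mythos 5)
Your plan is exactly the paper's proof: the published argument simply notes that the proof is similar to Theorem \ref{thm2} and exhibits the element
$a=\alpha_1 x+(2\alpha_1+\beta_2)y-\sum_{k\geq 1}\alpha_{8k+1}f_{8k}+\sum_{k\geq 0}\bigl(-\beta_{8k+3}f_{8k+1}+\alpha_{8k+3}f_{8k+2}+\alpha_{8k+4}f_{8k+3}-\tfrac{1}{3}\alpha_{8k+5}f_{8k+4}+\beta_{8k+7}f_{8k+5}-\beta_{8k+8}f_{8k+6}+\alpha_{8k+8}f_{8k+7}\bigr)$,
which is precisely what your matching of $ad_a$ against $d(x)$ and $d(y)$ (with $\lambda=\alpha_1$, $\mu=2\alpha_1+\beta_2$ forced by the diagonal coefficients) produces. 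Your additional remark that it suffices to verify $d=ad_a$ on the generators $x,y,f_1,f_2$ is a legitimate shortening of the bookkeeping but does not change the approach.
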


\begin{proof}
The proof is similar to that of Theorem \ref{thm2}, but this time as the element $a$ we take
%
$$a=\alpha_1 x+(2\alpha_1+\beta_2)y-\sum\limits_{k=1}^{t}\alpha_{8k+1}f_{8k}+\sum\limits_{k=0}^{t}(-\beta_{8k+3}f_{8k+1}+\alpha_{8k+3}f_{8k+2}+\alpha_{8k+4}f_{8k+3}-$$
\hfill$-\frac{1}3\alpha_{8k+5}f_{8k+4}+\beta_{8k+7}f_{8k+5}-\beta_{8k+8}f_{8k+6}+\alpha_{8k+8}f_{8k+7}).$

%
\end{proof}

Now we prove that the second (co)homology groups of the algebras $R_{\mathbf{n}_1}(0)$ and $R_{\mathbf{n}_2}(0)$ are trivial.
\begin{thm}\label{thm3}
$H^2(R_{\mathbf{n}_1}(0),R_{\mathbf{n}_1}(0))=0.$
\end{thm}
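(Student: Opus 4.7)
The plan is to take an arbitrary $\varphi \in Z^2(L,L)$ with $L=R_{\mathbf{n}_1}(0)$ and construct a linear map $d \in \Hom(L,L)$ such that $\varphi = \delta d$, where $\delta d(a,b) := [d(a),b] + [a,d(b)] - d([a,b])$. The key structural observation is that $\mathrm{ad}_x$ and $\mathrm{ad}_y$ commute and act semisimply on $L$, with combined weights $\mathrm{wt}(x)=\mathrm{wt}(y)=(0,0)$, $\mathrm{wt}(e_{3i-2})=(i,1)$, $\mathrm{wt}(e_{3i-1})=(i-1,-1)$, and $\mathrm{wt}(e_{3i})=(i,0)$. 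These pairs are pairwise distinct, so a weight determines a basis vector of $L$ up to scalar.

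First I would use the cocycle identity $Z(x,a,b)=0$ from \eqref{Coc} to reduce $\varphi$ to an $x$-weight homogeneous cocycle. Writing $\varphi(a,b) = \sum_\lambda v_\lambda$ as an $\mathrm{ad}_x$-weight decomposition, the identity forces each component with $\lambda \neq \mathrm{wt}_x(a)+\mathrm{wt}_x(b)$ to be of the form $\delta d_\lambda(a,b)$ for some $d_\lambda \in \Hom(L,\mathbf{n}_1)$, because $\mathrm{ad}_x - \lambda\cdot\mathrm{id}$ is invertible on each off-weight piece of $\mathbf{n}_1$ (the only zero $\mathrm{ad}_x$-weight subspace of $L$ is the two-dimensional $\mathrm{span}\{x,y\}$). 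Subtracting this coboundary makes $\varphi$ weight-homogeneous under $\mathrm{ad}_x$, and an analogous reduction using $\mathrm{ad}_y$ then makes $\varphi$ bihomogeneous.

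With $\varphi$ bihomogeneous, $\varphi(x,y)$ has weight $(0,0)$ and therefore lies in $\mathrm{span}\{x,y\}$, so it is killed by the coboundary of a scalar-valued map on $\mathrm{span}\{x,y\}$. Each $\varphi(e_i,e_j)$ is a scalar multiple of the unique basis vector whose weight matches $\mathrm{wt}(e_i)+\mathrm{wt}(e_j)$, or zero if no such vector exists, and the values $\varphi(e_i,x),\varphi(e_i,y)$ are similarly pinned to at most a single weight-line. The remaining cocycle identities $Z(e_i,e_j,e_k)=0$, $Z(e_i,e_j,x)=0$ and $Z(e_i,e_j,y)=0$ then become scalar recurrences in $i,j,k$; solving them and matching the surviving free parameters against the explicit description of $\Der(L)$ in Proposition~\ref{prop2}, every bihomogeneous cocycle is realized as $\delta d$ for an explicit $d \in \Hom(L,L)$.

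The main obstacle is this last step: writing out the infinite family of scalar recurrences coming from the cocycle identities inside $\mathbf{n}_1$ and showing that every solution is a coboundary, rather than producing a genuine non-trivial class (for instance coming from the rich $H^2$ of the bare Lie algebra $\mathbf{n}_1$). The guiding principle, parallel to the proof of Theorem~\ref{thm2}, is that the strict positivity of $\mathrm{ad}_x$-weights on $\mathbf{n}_1$ rules out resonance and forces each weight component of the correcting map $d$ to be uniquely determined by a finite linear system, so that $d$ assembles into a bona fide linear endomorphism of $L$ with $\varphi = \delta d$, yielding $H^2(L,L)=0$.
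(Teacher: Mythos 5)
Your weight-space strategy is a genuinely different (and more conceptual) route than the paper's: the paper instead writes $\varphi$ out in coordinates, subtracts an explicitly chosen coboundary $df$, and then extracts constraints from a long list of identities $Z(\cdot,\cdot,\cdot)=0$ until the remainder vanishes. Your first reduction is sound: since $[x,y]=0$ in $R_{\mathbf{n}_1}(0)$ and $\mathrm{ad}_x,\mathrm{ad}_y$ are simultaneously diagonal in the basis $\{x,y,e_1,e_2,\dots\}$ with pairwise distinct nonzero weights and zero-weight space exactly $\mathrm{span}\{x,y\}$, the cocycle identity with one slot equal to $x$ (resp.\ $y$) is precisely the homotopy identity $\theta_x\varphi=\delta(\iota_x\varphi)$, so every off-weight component of $\varphi$ is a coboundary and the sums involved are pointwise finite, hence legitimate in this infinite-dimensional setting. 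This cleanly kills most of the cochain complex and explains structurally why, e.g., $\varphi(e_{3i-2},e_{3j-2})$ and $\varphi(e_{3i-1},e_{3j-1})$ can be assumed zero.

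The gap is in the final step, and your own closing justification for it is wrong. After the reduction you are left with the weight-$(0,0)$ subcomplex, which is still infinite-dimensional: bihomogeneous cocycles are determined by infinitely many scalars $\varphi(e_{3i-2},e_{3j-1})\in\mathbb{C}e_{3(i+j-1)}$, $\varphi(e_{3i-2},e_{3j})\in\mathbb{C}e_{3(i+j)-2}$, $\varphi(e_{3i-1},e_{3j})\in\mathbb{C}e_{3(i+j)-1}$, $\varphi(e_{3i},e_{3j})\in\mathbb{C}e_{3(i+j)}$, $\varphi(e_i,x),\varphi(e_i,y)$ on the corresponding weight lines, and $\varphi(x,y)\in\mathrm{span}\{x,y\}$, to be compared against weight-zero $1$-cochains (diagonal maps on the $e_i$'s). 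The appeal to ``strict positivity of the $\mathrm{ad}_x$-weights rules out resonance'' cannot settle this: the zero-weight subcomplex is by definition exactly the resonant part that the homotopy $\iota_x$ cannot reach, so no eigenvalue argument applies there and nontrivial classes of $\mathbf{n}_1$ itself could in principle survive in weight zero. Closing this requires actually solving the infinite family of scalar recurrences coming from $Z(e_i,e_j,e_k)=0$, $Z(e_i,e_j,x)=0$, $Z(e_i,e_j,y)=0$ and exhibiting the solution as a coboundary --- which is essentially the computation occupying the bulk of the paper's proof (its two tables of constraints and the choices of $F_{1,1},F_{1,2},F_{2,1},F_{2,2}$ and the diagonal coefficients $a^{3i-2},a^{3i-1},a^{3i}$). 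As written, your proposal sets this computation up correctly but does not perform it, so the theorem is not yet proved.
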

\begin{proof} Let $\varphi\in Z^2(R_{\mathbf{n}_1}(0),R_{\mathbf{n}_1}(0))$. We show that there exists a $f\in \text{Hom}(R_{\mathbf{n}_1}(0),R_{\mathbf{n}_1}(0))$ such that $\varphi=df.$ An element of $\varphi$ of $Z^2(R_{\mathbf{n}_1}(0),R_{\mathbf{n}_1}(0))$ on the basis $\{x,y, e_1, e_2, ...\}$ is written in the form
$$\begin{array}{lll} \varphi(e_i,e_j)&=&\sum\limits_{k=1}^{p(i,j)}(\alpha_{i,j}^{3k-2}e_{3k-2}
+\alpha_{i,j}^{3k-1}e_{3k-1}+\alpha_{i,j}^{3k}e_{3k})+A_{i,j}^{1}x+A_{i,j}^{2}y,\\
\varphi(e_i,x)&=&\sum\limits_{k=1}^{s(1,i)}(\beta_{1,i}^{3k-2}e_{3k-2}+\beta_{1,i}^{3k-1}e_{3k-1}
+\beta_{1,i}^{3k}e_{3k})+B_{1,i}^{1}x+B_{1,i}^{2}y,\\
\varphi(e_i,y)&=&\sum\limits_{k=1}^{s(2,i)}(\beta_{2,i}^{3k-2}e_{3k-2}+\beta_{2,i}^{3k-1}e_{3k-1}
+\beta_{2,i}^{3k}e_{3k})+B_{2,i}^{1}x+B_{2,i}^{2}y,\end{array}$$
\hfill where $i,j \in \mathbb{N}$

and

\qquad \qquad \ \ \qquad$\varphi(x,y)=\sum\limits_{k=1}^{p}(\gamma^{3k-2}e_{2k-2}+\gamma^{3k-1}e_{3k-1}+\gamma^{3k}e_{3k})+C^{1}x+C^{2}y.$

We choose $f\in \text{Hom}(R_{\mathbf{n}_1}(0),R_{\mathbf{n}_1}(0))$ as follows

$$\begin{array}{lll}f(e_{3i-2})&=&-\sum\limits_{k=1}^{s(2,3i-2)}(\beta_{2,3i-2}^{3k-1}e_{3k-1}+\beta_{2,3i-2}^{3k}e_{3k})+a^{3i-2}e_{3i-2}+
\sum\limits_{k=1,k\neq i}^{s(1,{3i-2})}\frac{1}{i-k}\beta_{1,3i-2}^{3k-2}e_{3k-2}\\
&&\hfill -B_{2,3i-2}^1x-B_{2,3i-2}^2y,\\
f(e_{3i-1})&=&\sum\limits_{k=1}^{s(2,3i-1)}(\frac{1}{2}\beta_{2,3i-1}^{3k-2}e_{3k-2}+\beta_{2,3i-1}^{3k}e_{3k})+a^{3i-1}e_{3i-1}+
\sum\limits_{k=1,k\neq i}^{s(1,{3i-1})}\frac{1}{i-k}\beta_{1,3i-1}^{3k-1}e_{3k-1}\\
&&\hfill +B_{2,3i-1}^1x+B_{2,3i-1}^2y,\\
f(e_{3i})&=&\sum\limits_{k=1}^{s(2,3i)}(\beta_{2,3i}^{3k-2}e_{3k-2}-\beta_{2,3i}^{3k-1}e_{3k-1})+a^{3i}e_{3i}+
\sum\limits_{k=1,k\neq i}^{s(1,{3i})}\frac{1}{i-k}\beta_{1,3i}^{3k}e_{3k}+\frac{1}{i}B_{1,3i}^1x+\frac{1}{i}B_{1,3i}^2y,\end{array}$$
\hfill where $i,j \in \mathbb{N}$

\quad $\begin{array}{lll}f(x)&=&\sum\limits_{k=1}^{p}(\gamma^{3k-2}e_{3k-2}-\gamma^{3k-1}e_{3k-1})+F_{1,1}x+F_{1,2}y,\\
f(y)&=&\sum\limits_{k=1}^{p}\frac{1}{k}\gamma^{3k}e_{3k}+F_{2,1}x+F_{2,2}y,\end{array}$

with the constraints
$$a^{3i}+a^{3i-1}-a^{3(i+j)-1}=\alpha_{3i-1,3j}^{3(i+j)-1},$$
\ \ $$a^{3j}+a^{3i-2}-a^{3(i+j)-2}=-\alpha_{3i-2,3j}^{3(i+j)-2},$$
$$a^{3i-1}+a^{3j-2}-a^{3(i+j-1)}=-\alpha_{3i-1,3j-2}^{3(i+j-1)}.$$

Let us consider the cocycle $\psi=\varphi-df\in Z^2(R_{\mathbf{n}_1}(0),R_{\mathbf{n}_1}(0)).$ We show that $\psi$ is trivial. First of all by the definition for the cocycle $\psi$ one has

$\begin{array}{lll}\psi(e_{3i-2},y)&=&\sum\limits_{k=1}^{s(2,3i-2)}\beta_{2,3i-2}^{3k-2}e_{3k-2}+(iF_{2,1}-F_{2,2})e_{3i-2},\\
\psi(e_{3i-1},y)&=&\sum\limits_{k=1}^{s(2,3i-1)}\beta_{2,3i-1}^{3k-1}e_{3k-1}+((i-1)F_{2,1}+F_{2,2})e_{3i-1},\\
\psi(e_{3i},y)&=&\sum\limits_{k=1}^{s(2,3i)}\beta_{2,3i}^{3k}e_{3k}+iF_{2,1}e_{3i}+B_{2,3i}^1x+B_{2,3i}^2y,\\
\psi(e_{3i-2},x)&=&\sum\limits_{k=1}^{s(1,3i-2)}(\beta_{1,3i-2}^{3k-1}e_{3k-1}+\beta_{1,3i-2}^{3k}e_{3k})
+(iF_{1,1}-F_{1,2}+\beta_{1,3i-2}^{3i-2})e_{3i-2}+B_{1,3i-2}^{1}x+B_{1,3i-2}^{2}y,\\
\psi(e_{3i-1},x)&=&\sum\limits_{k=1}^{s(1,3i-1)}(\beta_{1,3i-1}^{3k-2}e_{3k-2}+\beta_{1,3i-1}^{3k}e_{3k})
+((i-1)F_{1,1}+F_{1,2}+\beta_{1,3i-1}^{3i-1})e_{3i-1}+B_{1,3i-1}^{1}x+B_{1,3i-1}^{2}y,\\
\psi(e_{3i},x)&=&\sum\limits_{k=1}^{s(1,3i)}(\beta_{1,3i}^{3k-2}e_{3k-2}+\beta_{1,3i}^{3k-1}e_{3k-1})
+(iF_{1,1}+\beta_{1,3i}^{3i})e_{3i},\\
\psi(x,y)&=&C^1x+C^2y. \quad \alpha_{3i-1,3j}^{3(i+j)-1}=0, \ \alpha_{3i-2,3j}^{3(i+j)-2}=0,\ \alpha_{3i-1,3j-2}^{3(i+j-1)}=0.\end{array}$

Secondly, if we impose to $\psi$ the cocycle identities $Z=0$ (see (\ref{Coc})) we derive a set of constraints.

\begin{center}
    \begin{tabular}{lll}
       \qquad\quad 2-cocyle identity & &\qquad\qquad\qquad Constraints\\
        \hline \hline
\\
        $Z(e_{3i},x,y)=0,\ i \geq1$ &\quad $\Rightarrow $\quad & $\left\{\begin{array}{ll}
                                                                 C^1=0,\ \beta_{2,3i}^{3k}=0,\ \ k\neq i, \ B_{2,3i}^1=B_{2,3i}^2=0,\\[1mm]
                                                                  \beta_{1,3i}^{3k-2}=\beta_{1,3i}^{3k-1}=0,\ \ k\geq1,\\[1mm]
                                                                 \end{array}\right.$\\
        $Z(e_{3i-2},x,y)=0,\ i\geq1 $ &\quad $\Rightarrow $\quad & $\left\{\begin{array}{ll}
                                                                 C^2=0,\  \beta_{2,3i-2}^{3k-2}=0, \ k\neq i,\  B_{1,3i-2}^1=B_{1,3i-2}^2=0,\\[1mm]
                                                                \beta_{1,3i-2}^{3k-1}=\beta_{1,3i-2}^{3k}=0,\ \ k\geq1,\\[1mm]
                                                                 \end{array}\right.$\\
        $Z(e_{3i-1},x,y)=0,\ i\geq1 $ &\quad $\Rightarrow $\quad & $\left\{\begin{array}{ll}
                                                                 \beta_{2,3i-1}^{3k-1}=0,\ \ k\neq i,\ \ B_{1,3i-1}^1=B_{1,3i-1}^2=0,\\[1mm]
                                                                \beta_{1,3i-1}^{3k-2}=\beta_{1,3i-1}^{3k}=0,\ \ k \geq 16. \\[1mm]
                                                                 \end{array}\right.$\\
             \end{tabular}
\end{center}
         \begin{center}
    \begin{tabular}{lll}
        \qquad\quad 2-cocyle identity & & \qquad\qquad\qquad Constraints \\
        \hline \hline
\\

        $Z(e_{3i-1},e_{3j},y)=0,\  i,j\geq1,$ &\quad $\Rightarrow $\quad & $\left\{\begin{array}{ll}
                                                                 \alpha_{3i-1,3j}^{3k-2}=\alpha_{3i-1,3j}^{3k}=A_{3i-1,3j}^1=A_{3i-1,3j}^2=0,\ k\geq1,\\[1mm]
                                                                 \beta_{2,3(i+j)-1}^{3(i+j)-1}=\beta_{2,3j}^{3j}+\beta_{2,3i-1}^{3i-1}, \
                                                                 \Rightarrow \beta_{2,3i-1}^{3i-1}=(i-1)\beta_{2,3}^{3}+\beta_{2,2}^{2},\\[1mm]
                                                                 \end{array}\right.$\\
        $Z(e_{3i-2},e_{3j},y)=0,\  i,j\geq1,$ &\quad $\Rightarrow $\quad & $\left\{\begin{array}{ll}
                                                                 \alpha_{3i-2,3j}^{3k-1}=\alpha_{3i-2,3j}^{3k}=A_{3i-2,3j}^1=A_{3i-2,3j}^2=0,\ k\geq1,\\[1mm]
                                                                 \beta_{2,3(i+j)-2}^{3(i+j)-2}=\beta_{2,3j}^{3j}+\beta_{2,3i-2}^{3i-2},\Rightarrow
                                                                 \beta_{2,3i-2}^{3i-2}=(i-1)\beta_{2,3}^{3}+\beta_{2,1}^{1},\\[1mm]
                                                                 \end{array}\right.$\\
        $Z(e_{3i-1},e_{3j-2},y)=0,\  i,j\geq1,$ &\quad $\Rightarrow $\quad & $\left\{\begin{array}{ll}
                                                                 \alpha_{3i-1,3j-2}^{3k-2}=\alpha_{3i-1,3j-2}^{3k-1}=0,\ k\geq1,\\[1mm]
                                                                 \beta_{2,3i}^{3i}=(i-1)\beta_{2,3}^3+\beta_{2,2}^2+\beta_{2,1}^1,\Rightarrow
                                                                 \beta_{2,3}^3=\beta_{2,2}^2+\beta_{2,1}^1.\\[1mm]
                                                                 \end{array}\right.$\\
    \end{tabular}
\end{center}

Let
$$F_{2,1}=-(\beta_{2,2}^2+\beta_{2,1}^1),\ \ F_{2,2}=\beta_{2,2}^2.$$
Then we have,
$$\psi(e_{3i-1},y)=0, \quad \psi(e_{3i-2},y)=0, \quad \psi(e_{3i},y)=0.$$

Now we consider the following identities and get constraints
\begin{center}
    \begin{tabular}{lll}
      \qquad\quad  2-cocyle identity & &\qquad\qquad\qquad Constraints\\
        \hline \hline
\\
        $Z(e_{3i-1},e_{3j},x)=0,\  i,j\geq1,$ &\quad $\Rightarrow $\quad & $ \begin{array}{ll}
                                                                 \alpha_{3i-1,3j}^{3k-1}=0,\ k\geq i+j,\  \beta_{1,3(i+j)-1}^{3(i+j)-1}=\beta_{1,3j}^{3j}+\beta_{1,3i-1}^{3i-1},\\[1mm]
                                                                 \end{array}$\\
        $Z(e_{3i-2},e_{3j},x)=0,\  i,j\geq1,$ &\quad $\Rightarrow $\quad & $ \begin{array}{ll}
                                                                 \alpha_{3i-1,3j}^{3k-2}=0,\ k\geq i+j,\  \beta_{1,3(i+j)-2}^{3(i+j)-2}=\beta_{1,3j}^{3j}+\beta_{1,3i-2}^{3i-2},\\[1mm]
                                                                 \end{array}$\\
        $Z(e_{3i-1},e_{3j-2},x)=0,\  i,j\geq1,$ &\quad $\Rightarrow $\quad & $\left\{\begin{array}{ll}
                                                                 \alpha_{3i-1,3j-2}^{3k}=0,\ \ k\neq i+j-1,\ A_{3i-1,3j-2}^1=A_{3i-1,3j-2}^2=0,\\[1mm]
                                                                 \beta_{1,3(i+j-1)}^{3(i+j-1)}=\beta_{1,3j-2}^{3j-2}+\beta_{1,3i-1}^{3i-1},\\[1mm]
                                                                 \end{array}\right.$\\
        $Z(e_{3i},e_{3j},y)=0,\  i,j\geq1,$ &\quad $\Rightarrow $\quad & $ \begin{array}{ll}
                                                                 \alpha_{3i,3j}^{3k-1}=\alpha_{3i,3j}^{3k-2}=0, \ k\geq1,\\[1mm]
                                                                 \end{array}$\\
        $Z(e_{3i-1},e_{3j-1},y)=0,\  i,j\geq1,$ &\quad $\Rightarrow $\quad & $ \begin{array}{ll}
                                                                 \alpha_{3i-1,3j-1}^{k}=0, \ k\geq1,\\[1mm]
                                                                 \end{array}$\\
        $Z(e_{3i-2},e_{3j-2},y)=0,\  i,j\geq1,$ &\quad $\Rightarrow $\quad & $ \begin{array}{ll}
                                                                \alpha_{3i-2,3j-2}^{k}=0, \ k\geq1,\\[1mm]
                                                                 \end{array}$\\
        $Z(e_{3i},e_{3j},e_{3k-1})=0,\  i,j\geq1,$ &\quad $\Rightarrow $\quad & $ \begin{array}{ll}
                                                                \alpha_{3i,3j}^{3k}=0, \ k\geq1.\\[1mm]
                                                                 \end{array}$\\
    \end{tabular}
\end{center}

Therefore we obtain
$$\beta_{1,3i-1}^{3i-1}=(i-1)\beta_{1,3}^3+\beta_{1,2}^{2}, \ \ \beta_{1,3i-2}^{3i-2}=i\beta_{1,3}^{3}-\beta_{1,2}^{2}, \ \beta_{1,3i}^{3i}=i\beta_{1,3}^{3}.$$

Finally, if we let $$F_{1,1}=-\beta_{1,3}^3,\ \ F_{1,2}=-\beta_{1,2}^2$$ then we get
$$\psi(e_{3i-1},x)=0, \quad \psi(e_{3i-2},x)=0, \quad \psi(e_{3i},x)=0.$$
Hence, $\psi\equiv 0.$
\end{proof}

\begin{thm}\label{thm4}
$H^2(R_{\mathbf{n}_2}(0),R_{\mathbf{n}_2}(0))=0.$
\end{thm}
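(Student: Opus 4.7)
The plan is to follow the template established in the proof of Theorem \ref{thm3}, adapted to the richer $8$-periodic structure of $\mathbf{n}_2$. I start with an arbitrary cocycle $\varphi\in Z^2(R_{\mathbf{n}_2}(0),R_{\mathbf{n}_2}(0))$ and expand it on the basis $\{x,y,f_1,f_2,\dots\}$:
\[
\varphi(f_i,f_j)=\sum_{k}\alpha_{i,j}^{k}f_{k}+A_{i,j}^{1}x+A_{i,j}^{2}y,
\]
and analogously for $\varphi(f_i,x)$, $\varphi(f_i,y)$, and $\varphi(x,y)$. My goal is to construct $f\in\mathrm{Hom}(R_{\mathbf{n}_2}(0),R_{\mathbf{n}_2}(0))$ so that $\psi:=\varphi-df$ is identically zero.

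The construction of $f$ mimics the $\mathbf{n}_1$ case but respects the residue classes modulo~$8$. Since $[f_{8i+r},x]$ and $[f_{8i+r},y]$ act diagonally with eigenvalues determined by $r\in\{1,\dots,8\}$ (see Proposition~\ref{1prop2}), I set $f(f_{8i+r})$ equal to a specific linear combination in which the coefficients $\beta_{2,8i+r}^{\,*}$ (and analogously $\beta_{1,8i+r}^{\,*}$) appear with weights $\frac{1}{\lambda-\mu}$, where $\lambda,\mu$ are the corresponding eigenvalues for $\mathrm{ad}_y$ (or $\mathrm{ad}_x$). I further define $f(x)$ and $f(y)$ using the coefficients $\gamma^{k}$ of $\varphi(x,y)$, with weights chosen to kill the $\gamma$-terms when one subtracts $df(x,y)$. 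This choice is dictated by Proposition~\ref{1prop2}, which explicitly identifies which directions are hit by inner derivations.

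Having defined $f$, I analyze $\psi=\varphi-df$ through cocycle identities $Z(a,b,c)=0$ in the order: first $Z(f_i,x,y)=0$ (which kills $C^{1}$, $C^{2}$, most $B$-terms, and forces the $\beta$-coefficients off the diagonal to vanish for each residue class $r\pmod 8$); then $Z(f_i,f_j,y)=0$ and $Z(f_i,f_j,x)=0$ to eliminate the $\alpha_{i,j}^{k}$'s and reduce the remaining diagonal $\beta_{*,*}^{*}$'s to a linear recursion in $i$; and finally the purely nilpotent identities $Z(f_i,f_j,f_k)=0$ to wipe out the last components. The same kind of ``telescoping'' as in Theorem~\ref{thm3} yields relations such as $\beta_{2,8(i+j)+r+s}^{\,*}=\beta_{2,8i+r}^{\,*}+\beta_{2,8j+s}^{\,*}$, which together with an appropriate choice of the free scalars $F_{1,1},F_{1,2},F_{2,1},F_{2,2}$ inside $f(x)$ and $f(y)$ eliminates the diagonal contributions.

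The main obstacle is bookkeeping: the $8\times 8$ multiplication table defining $\mathbf{n}_2$ produces $64$ distinct brackets (many nonzero, with varying signs and coefficients $\pm 1,\pm 2,\pm 3$), so the cocycle identity $Z(f_{8i+r},f_{8j+s},\cdot)=0$ splits into many subcases indexed by $(r,s)\in\{1,\dots,8\}^{2}$. The verification that every $\alpha_{i,j}^{k}$, $A_{i,j}^{*}$, $\beta_{*,*}^{*}$, $B_{*,*}^{*}$, $\gamma^{k}$, $C^{*}$ is annihilated is routine but voluminous. No genuinely new idea is required beyond careful case analysis, which is why the statement can reasonably be left ``similar to Theorem~\ref{thm3}'': the algebraic mechanism (solving a diagonalised system against triangular eigenvalue data from Proposition~\ref{1prop2}) is identical, only the indexing modulo $8$ must be tracked.
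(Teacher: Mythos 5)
Your proposal follows exactly the route the paper takes: the published proof of Theorem~\ref{thm4} consists only of the remark that it is ``similar to that of Theorem~\ref{thm3}'', and your sketch is precisely that strategy (construct $f$ from the $\varphi(f_i,x)$, $\varphi(f_i,y)$, $\varphi(x,y)$ data using the eigenvalue structure of Proposition~\ref{1prop2}, then exhaust the cocycle identities in the same order) transplanted to the $8$-periodic setting. In fact your outline is more explicit than the paper's own one-line proof, though like the paper it stops short of the full case-by-case verification over the residue classes modulo $8$.
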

\begin{proof} The proof of the theorem is the similar to that of Theorem \ref{thm3}.
\end{proof}


\begin{thebibliography}{99.}

 \bibitem{Qobil2} K.K. Abdurasulov, B.A. Omirov, G.O. Solijanova, {\it On pro-solvable Lie algebras with maximal pro-nilpotent ideals $\mathbf{m}_0$ and $\mathbf{m}_2$}, (2020), arXiv:2001.06621.

\bibitem{Ayupov} Sh. Ayupov, B. Yusupov,  2-Local derivations of infinite-dimensional Lie algebras, {\it Journal of Algebra and Its Applications}, (2020), 19(5), 2050100 (12 pages) DOI: 10.1142/S0219498820501005.

%

\bibitem{F} A. Fialowski, On the classification of graded Lie algebras with two generators, {\it Vestnyik MGU}, (1983),  62--64 (in Russian). English translation: {\it Moscow Univ. Math. Bull.}, (1983), 38(2), 76--79.

\bibitem{Garland} H. Garland. Dedekind's $\eta$-function and the cohomology of infinite-dimensional Lie algebras,
{\it Proc. Nat. Acad. Sc.} (USA), (1975), 72, 2493 -- 2495.

\bibitem{Kac} V. Kac, {\it Infinite-Dimensional Lie Algebras}, 2nd edition. Cambridge Univ. Press, Cambridge, 1985.

\bibitem{Yu} K. Khakimdjanova, Yu. Khakimdjanov, Sur une classe d'alg\`{e}bres de Lie de dimension infinie, {\it Comm. Algebra}, 29(1) (2001), 177 -- 191.

\bibitem{Lepowsky1} J. Lepowsky, Generalized Verma modules, loop spaces cohomology and Mac-Donald-type
identities, {\it Ann. Sc. Ecole Norm. Sup.} (4), (1979), 12(2), 169 -- 234.

\bibitem{Lepowsky2} J. Lepowsky, S. Milne, Lie algebraic approaches to classical partition identities, {\it Adv.
Math.}, (1978), 29, 15 -- 59.

\bibitem{Millio} D.V. Millionshchikov, {\it Naturally  Graded Lie  Algebras (Carnot Algebra) of Slow Growth}, arXiv:1705.07494v2.

\bibitem{DisM} D.V. Millionshchikov, {\it Cohomologies of positively graded Lie algebras and their applications}, Doctor of Sciences Dissertation, Lomonosov Moscow State University, Moscow, 2019, 262 pp (in Russian).

\bibitem{Mub} G.M. Mubaryakzyanov,  The classification of real structures of Lie algebra of order five, {\it Izv. VUZ. Math.}, 3 (1963), 99 -- 106.

\bibitem{ZSh1} A. Shalev, E.I. Zelmanov, Narrow Lie algebras: A coclass theory and a characterization of
the Witt algebra,  {\it J. Algebra}, 189 (1997), 294 -- 331.
\bibitem{ZSh2} A. Shalev, E.I. Zelmanov, Narrow algebras and groups,  {\it J. of Math. Sciences}, 93:6 (1999),
951 -- 963

\end{thebibliography}
\end{document}